\newcommand\reallywidehat[1]{%
\savestack{\tmpbox}{\stretchto{%
  \scaleto{%
    \scalerel*[\widthof{\ensuremath{#1}}]{\kern.1pt\mathchar"0362\kern.1pt}%
    {\rule{0ex}{\textheight}}
  }{\textheight}%
}{2.4ex}}%
\stackon[-6.9pt]{#1}{\tmpbox}%
}
\numberwithin{equation}{section}
\newtheorem{thm}{Theorem}[section]
\newtheorem{lem}[thm]{Lemma}
\newtheorem{prop}[thm]{Proposition}
\theoremstyle{remark}
\newtheorem{rem}[thm]{Remark}
\newtheorem{example}[thm]{Example}
\theoremstyle{definition}
\theoremstyle{definition}
\theoremstyle{definition}
\numberwithin{equation}{section}
\newcommand{\ds}{\displaystyle}
\newcommand{\Z}{\mathbb Z}
\newcommand{\C}{\mathbb C}
\DeclareMathOperator{\Gr}{Gr}
\DeclareMathOperator{\Span}{span}
\def\presuper#1#2%
\begin{document}

\title[Partition Intervals]{On the generating function for intervals in Young's lattice}
\author{Faqruddin Ali Azam}
\address{Department of Mathematics\\ Oklahoma State University  \\ Stillwater, Oklahoma 74078 \\ U.S.A. }
\email{sazam@okstate.edu}

\author{Edward Richmond}
\address{Department of Mathematics\\ Oklahoma State University  \\ Stillwater, Oklahoma 74078 \\ U.S.A. }
\email{edward.richmond@okstate.edu}


\begin{abstract}
In this paper, we study a family of generating functions whose coefficients are polynomials that enumerate partitions in lower order ideals of Young's lattice.  Our main result is that this family satisfies a rational recursion and are therefore rational functions.  As an application, we calculate the asymptotic behavior of the cardinality of lower order ideals for the ``average" partition of fixed length and give a homological interpretation of this result in relation to Grassmannians and their Schubert varieties.
\end{abstract}

\maketitle
 %

\section{introduction}
\ytableausetup {boxsize=0.75em}
\ytableausetup
{aligntableaux=top}


  Young's lattice is the set of integer partitions partially ordered by containment of Young diagrams.  Let $\Lambda$ denote the set of integer partitions and for any  $\lambda\in \Lambda$, consider the lower order ideal $[\emptyset,\lambda]$ in Young's lattice.   Lower order ideals have been studied extensively and are important objects in algebraic combinatorics, representation theory and algebraic geometry.  For example, maximal chains in $[\emptyset,\lambda]$ correspond to standard Young tableaux of shape $\lambda$.  In representation theory of the symmetric group, standard Young tableaux index a basis for irreducible representations.  In algebraic geometry, these tableaux are used to characterize the irreducible components of Springer fibers in the flag variety \cite{Fresse-Melnikov15}.  For more on Young's lattice, Young tableaux and their applications see \cite{Fulton97,Macdonald15}.

For a partition $\lambda=(\lambda_1\geq \lambda_2\geq\cdots \geq \lambda_k>0)$, let $|\lambda|:=\sum_i\lambda_i$ denote its rank and $k$ denote its length.  The rank generating function of the interval $[\emptyset,\lambda]$ is defined as
$$
P_{\lambda}(y):=\sum_{\mu\in[\emptyset,\lambda]} y^{|\mu|}.
$$
For example, if $\lambda=(2,1)$, then $P_{(2,1)}(y)=1+y+2y^2+y^3$ as seen from the Hasse diagram of $[\emptyset,(2,1)]$:

\begin{center}
\begin{tikzpicture}[scale=0.5]
\draw[thick] (0,-6)--(0,-9);
\draw[thick] (0,-2)--(-3,-4);
\draw[thick] (0,-2)--(3,-4);
\draw[thick] (0,-6)--(-3,-4);
\draw[thick] (0,-6)--(3,-4);
\draw[white, fill=white] (0,-2) circle (6ex);
\draw[white, fill=white] (-3,-4) circle (6ex);
\draw[white, fill=white] (3,-4) circle (6ex);
\draw[white, fill=white] (0,-6) circle (6ex);
\draw[white, fill=white] (0,-9) circle (6ex);
\draw (0,-2) node {\begin{ytableau}
{}&  \\
{}
\end{ytableau}};
\draw (-3,-4) node {\begin{ytableau}
{}&
\end{ytableau}};
 \draw (3,-4) node {\begin{ytableau}
{}    \\
{}
\end{ytableau}};
\draw (0,-6) node {\begin{ytableau}
{}
\end{ytableau}};
 \draw (0,-9) node{$\emptyset$};
\end{tikzpicture}
\end{center}
In the case that $\lambda=(n^k)$ (i.e. $\lambda$ has a Young diagram with rectangular shape), the corresponding rank generating function is the Gaussian polynomial
\begin{equation}\label{Eq:Gaussian}
P_{(n^k)}(y)=\binom{n+k}{k}_y:=\frac{(1-y^{n+1})\cdots(1-y^{n+k})}{(1-y)\cdots(1-y^{k})}.
\end{equation}
Gaussian polynomials are palindromic, unimodal polynomials which generalize binomial coefficients \cite{Andrews99}.  Taking the limit as $y\rightarrow 1$ yields
$$P_{(n^k)}(1)=\#[\emptyset,(n^k)]=\binom{n+k}{k}$$
and hence, asymptotically, we have $\ds\#[\emptyset,(n^k)]\sim \frac{n^k}{k!}$ as $n\rightarrow \infty.$  For arbitrary partitions, Hanada-Monhanty \cite{Handa-Mohanty80} and Ueno \cite{Ueno88} showed that $P_{\lambda}(y)$ can be calculated as a determinant of Gaussian polynomials generalizing Equation \eqref{Eq:Gaussian} (See also \cite{Gessel-Loehr10}).  Despite the existence of such a nice formula, the behavior of $P_{\lambda}(y)$ is difficult to predict. If the Young diagram of $\lambda$ is not rectangular, then $P_{\lambda}(y)$ is not palindromic.  In \cite{Stanton90}, Stanton shows that $P_{\lambda}(y)$ is not necessarily unimodal.

The goal of this paper is to study the polynomials $P_{\lambda}(y)$ for partitions of a fixed length by collecting them into a single generating function.  We prove that this generating function is rational and, as an application, we determine the asymptotic behavior of the cardinality $\#[\emptyset,\lambda]$ for the ``average partition" of length $k$ as $|\lambda|\rightarrow\infty$.  We also describe these results in the context of singular homology of the Grassmannian and its Schubert varieties.


\section{Main results}\label{S:main_results}

Let $\Lambda(k)$ denote the set of partitions of length $k$ and define the generating series
$$Q_k(x_1,\ldots,x_k,y):=\sum_{\lambda\in\Lambda(k)} P_\lambda(y)\cdot \textbf{x}^\lambda$$
where $\textbf{x}^\lambda:=x_1^{\lambda_1}\cdots x_k^{\lambda_k}.$  Our first result is that the function $Q_k$ satisfies a rational recursion and hence is a rational function.  Define the square-free monomial $$p_k:=x_1\cdots x_k$$ and for a sequence of parameters $Z=(z_1,\ldots,z_{k+1}),$ let $$Q_k(Z):=Q_k(z_1,\ldots,z_{k+1}).$$  If $Z=(x_1,\ldots,x_k,y)$ is the standard set of variables, then we denote $Q_k=Q_k(Z).$
\begin{thm}\label{T:main1}
The function $Q_0=1$ and for $k\geq 1$, we have
\begin{equation}\label{Eq:main_recursion}
\left(1-p_k \right)Q_k = x_kQ_{k-1}+\sum_{0\leq i<r\leq k}\left(\frac{y^r p_k}{1-y^r p_r}\right) Q_{k-r}(Z_r)\cdot Q_i,
\end{equation}
 where $Z_r:= \left(y^rp_{r+1},x_{r+2},x_{r+3},\cdots,x_k,y \right).$  In particular, $Q_k$ is a rational function in the variables $x_1,\ldots,x_k, y.$
\end{thm}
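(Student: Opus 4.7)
The plan is to prove the recursion~\eqref{Eq:main_recursion} by a weight-preserving bijection between pairs $(\mu,\lambda)$ with $\lambda\in\Lambda(k)$ and $\mu\subseteq\lambda$ (enumerated with weight $y^{|\mu|}\textbf{x}^\lambda$) and the tuples indexing the terms on the right-hand side.

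First, I would rewrite the left-hand side using the fact that multiplying $Q_k$ by $p_k$ shifts $\lambda\in\Lambda(k)$ to $\lambda+(1^k)$:
\[
(1-p_k)Q_k=\sum_{\lambda\in\Lambda(k)}\bigl(P_\lambda(y)-[\lambda_k\geq 2]\,P_{\lambda-(1^k)}(y)\bigr)\textbf{x}^\lambda.
\]
Since $P_\lambda(y)-P_{\lambda-(1^k)}(y)=\sum_{\mu\subseteq\lambda,\ \mu\not\subseteq\lambda-(1^k)}y^{|\mu|}$, the non-vanishing terms correspond exactly to pairs $(\mu,\lambda)$ satisfying either (a) $\lambda_k=1$ (with any $\mu\subseteq\lambda$), or (b) $\lambda_k\geq 2$ together with $\mu_r=\lambda_r$ for at least one index $r$.

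Second, I would interpret each term on the right-hand side. The summand $x_kQ_{k-1}$ plainly enumerates the pairs in (a) having $\mu_k=0$. For each $(i,r)$ with $0\leq i<r\leq k$, I would expand
\[
\frac{y^rp_k}{1-y^rp_r}=\sum_{s\geq 1}y^{rs}\,x_1^s\cdots x_r^s\,x_{r+1}\cdots x_k
\]
and use $Q_i=\sum_{\sigma\in\Lambda(i)}P_\sigma(y)\,x_1^{\sigma_1}\cdots x_i^{\sigma_i}$ together with $Q_{k-r}(Z_r)=\sum_{\nu\in\Lambda(k-r)}P_\nu(y)\,y^{r\nu_1}x_1^{\nu_1}\cdots x_{r+1}^{\nu_1}x_{r+2}^{\nu_2}\cdots x_k^{\nu_{k-r}}$. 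Expanding $P_\sigma$ and $P_\nu$ as sums over sub-partitions $\beta\subseteq\sigma$ and $\alpha\subseteq\nu$, each resulting monomial is indexed by a tuple $(s,\sigma,\beta,\nu,\alpha)$. Writing $\nu_1:=0$ when $r=k$ and $a:=s+\nu_1$, this monomial equals $y^{|\mu|}\textbf{x}^\lambda$ for
\begin{align*}
\lambda&=\bigl(\sigma_1+a,\ldots,\sigma_i+a,\,\underbrace{a,\ldots,a}_{r-i},\,1+\nu_1,\ldots,1+\nu_{k-r}\bigr),\\
\mu&=\bigl(\beta_1+a,\ldots,\beta_i+a,\,\underbrace{a,\ldots,a}_{r-i},\,\alpha_1,\ldots,\alpha_{k-r}\bigr).
\end{align*}

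The central step is to verify that this forward map is a bijection onto the pairs in (b) together with those pairs in (a) having $\mu_k=1$. To do so I would check that $\lambda\in\Lambda(k)$ and $\mu\subseteq\lambda$, and recover the tuple from $(\mu,\lambda)$ via the statistics $r=\max\{j:\mu_j=\lambda_j\}$ and $i=\max\{j<r:\lambda_j>\lambda_r\}$ (or $0$ if no such $j$), then reading off $s=\lambda_r-\lambda_{r+1}+1$ (setting $\lambda_{r+1}:=1$ for $r=k$), $\nu_\ell=\lambda_{r+\ell}-1$, $\sigma_j=\lambda_j-\lambda_r$, $\beta_j=\mu_j-\lambda_r$, and $\alpha_\ell=\mu_{r+\ell}$. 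The constraint $\nu\in\Lambda(k-r)$ translates to $\lambda_k\geq 2$ whenever $r<k$, which prevents any overlap with the pairs counted by $x_kQ_{k-1}$; assembling these correspondences yields~\eqref{Eq:main_recursion}. Rationality then follows by induction on $k$: $Q_0=1$ is rational, and given rationality of $Q_0,\ldots,Q_{k-1}$, each factor on the right is rational in $x_1,\ldots,x_k,y$, so $(1-p_k)Q_k$ and hence $Q_k$ is rational. The main obstacle is the careful bookkeeping of the bijection — in particular, aligning the plateau block of length $r-i$ (encoded by the geometric factor $\frac{1}{1-y^rp_r}$), the parameter $s$, and the forced condition $\lambda_k\geq 2$ for $r<k$ with the subtraction $-P_{\lambda-(1^k)}$ on the LHS and the special role of $x_kQ_{k-1}$.
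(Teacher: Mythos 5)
Your proposal is correct, and it is organized quite differently from the paper even though the combinatorial core coincides. Both arguments ultimately classify the sub-partitions $\mu\subseteq\lambda$ by the largest index $r$ with $\mu_r=\lambda_r$, factor the interval at that index, and produce the factor $y^rp_k/(1-y^rp_r)$ by summing a geometric series over the value of $\lambda_r$: your parameter $s=\lambda_r-\lambda_{r+1}+1$ plays the role of the paper's summation over $t=\lambda_r$ in Proposition \ref{P:3rd_summand}, and your $\sigma,\nu$ are exactly the paper's $\rho^t(\lambda(r))$ and $\rho(\lambda(r)^c)$. The difference is architectural: you clear the factor $(1-p_k)$ at the outset, interpret $P_\lambda(y)-[\lambda_k\geq 2]\,P_{\lambda-(1^k)}(y)$ as a sum over pairs $(\mu,\lambda)$ in which $\mu$ meets $\lambda$ in some part (or $\lambda_k=1$), and then give a single global weight-preserving bijection with the monomials of the right-hand side; the paper instead refines $Q_k=\sum_m Q_{k,m}$ by the last part, treats $m=1$ separately (Proposition \ref{P:Q_k1}), decomposes $P_\lambda$ via Lemmas \ref{L:sum}, \ref{L:product} and \ref{L:product_sum} for $m\geq 2$, and recovers the $(1-p_k)$ on the left by telescoping the summand $p_kQ_{k,m-1}$ over $m$ --- precisely the cancellation your subtraction of $P_{\lambda-(1^k)}$ performs in one step. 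Your route buys a shorter, self-contained argument with no auxiliary series $Q_{k,m}$ and no final summation over $m$; the cost is that everything is concentrated in the bookkeeping of one bijection, namely recoverability of $r$ and $i$, the automatic plateau $\mu_j=\lambda_j=\lambda_r$ for $i<j\leq r$ (which follows from $\mu_j\geq\mu_r=\lambda_r=\lambda_j\geq\mu_j$), and the non-overlap with $x_kQ_{k-1}$ (for $r<k$ via the requirement $\nu\in\Lambda(k-r)$, i.e.\ $\lambda_k\geq 2$, and for $r=k$ because there $\mu_k=\lambda_k\geq 1$ while $x_kQ_{k-1}$ contributes only pairs with $\mu_k=0$); I checked these points and they all go through. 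The paper's modular lemmas are slightly longer here but get reused later, e.g.\ Lemma \ref{L:sum} has the analogue Lemma \ref{L:sum2} in the asymptotic analysis of Section 4.
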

\begin{example}
Using Theorem \ref{T:main1}, we see that
$$Q_1(x_1,y)=\frac{x_1+(x_1-x_1^2)y}{(1-x_1)(1-x_1y)}$$ and
$$Q_{2}(x_1, x_2, y) =  \frac{x_1x_2 + (x_1x_2- x_1^2x_2-x_1^2x_2^2)y + (x_1x_2-x_1^2x_2^2 )y^2 + (x_1^3x_2^3 - x_1^2x_2^2)y^3 } {(1-x_1)(1-x_1x_2)(1-x_1y)(1-x_1x_2y)(1-x_1x_2y^2)}.$$
\end{example}

Observe that if we set $y=0$, then Equation \eqref{Eq:main_recursion} implies
\begin{equation}\label{Eq:y_is_0_recursion}
Q_k(x_1,\ldots,x_k,0)=\sum_{\lambda\in\Lambda(k)} \textbf{x}^\lambda=\frac{x_k}{(1-p_k)}\cdot Q_{k-1}(x_1,\ldots,x_{k-1},0).
\end{equation}
Thus we get the well-known formula
$$\sum_{\lambda\in\Lambda(k)} \textbf{x}^\lambda=p_k\cdot\prod_{m=1}^k(1-p_m)^{-1}.$$
Let $c_{k,n}$ denote the number of partitions in $\Lambda(k)$ of rank $n.$  If we further set $x=x_1=x_2=\cdots=x_k$ and $y=0$, then
$$Q_k(x,\ldots,x,0)=\sum_{\lambda\in\Lambda(k)} x^{|\lambda|}=\sum_{n=k}^{\infty} c_{k,n}\, x^n.$$
Applying Equation \eqref{Eq:y_is_0_recursion} recovers the classical formula that
$$\sum_{n=k}^{\infty} c_{k,n}\, x^n=x^k\cdot\prod_{m=1}^k(1-x^m)^{-1}.$$
We now consider the deformation of $Q_k$ where we set $x=x_1=x_2=\cdots=x_k$ and $y=1$.  Define the coefficients $C_{k,n}$ by
$$Q_k(x,\ldots,x,1)=\sum_{n\geq k} C_{k,n}\, x^n.$$
By definition, the coefficients $$\ds C_{k,n}=\sum_{\substack{\lambda\in \Lambda(k)\\ |\lambda|=n}} \#[\emptyset, \lambda].$$
Let $$A_{k,n}:=\frac{C_{k,n}}{c_{k,n}}$$ denote the average cardinality of lower order ideals for partitions of rank $n$ in $\Lambda(k)$.  Our second main result is on the asymptotic behavior of $A_{k,n}$ as $n\rightarrow\infty.$  For $k\geq 1$, define the interval
$$I_k:=[(k,k-1,\ldots,2,1), (k)^k]$$
and the constant
$$G_k:=\frac{(k-1)!}{(2k-1)!}\cdot\sum_{\lambda\in I_k}\left(\prod_{i=1}^k\,\lambda_i^{-1}\right).$$


\begin{thm}\label{T:main2}
Let $k\geq 1.$  Then $\ds A_{k,n}\sim G_k\cdot n^k$ as $n\rightarrow \infty$.
\end{thm}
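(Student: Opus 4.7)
The plan is to derive the asymptotic behaviour of $A_{k,n}$ from the singular behaviour of its numerator and denominator generating functions at $x=1$. The denominator is immediate: $\sum_n c_{k,n}x^n = x^k\prod_{m=1}^k(1-x^m)^{-1}$ has a pole of order $k$ at $x=1$ with leading coefficient $1/k!$, and the standard transfer theorem yields $c_{k,n}\sim n^{k-1}/\bigl(k!(k-1)!\bigr)$. For the numerator, Theorem~\ref{T:main1} says $Q_k(x,\dots,x,1)$ is a rational function of $x$. The first task is to show that its pole at $x=1$ has order exactly $2k$, so that
$$L_k := \lim_{x\to 1}(1-x)^{2k}\,Q_k(x,\dots,x,1)$$
exists and is positive; transfer then gives $C_{k,n}\sim L_k\cdot n^{2k-1}/(2k-1)!$. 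Forming the ratio, the theorem reduces to proving
$$L_k = \frac{1}{k!}\sum_{\lambda\in I_k}\prod_{i=1}^k \lambda_i^{-1}.$$

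To compute $L_k$, I would replace $\#[\emptyset,\lambda]=P_\lambda(1)$ by its leading polynomial term in $\lambda_1,\dots,\lambda_k$; by the Hanada--Mohanty--Ueno determinantal formula at $y=1$ this leading term is the Euclidean volume
$$V(\lambda) := \mathrm{vol}\bigl\{\mu\in\R^k : 0\leq\mu_k\leq\dots\leq\mu_1,\ \mu_i\leq\lambda_i\bigr\},$$
a homogeneous polynomial of degree $k$ in the $\lambda_i$. The lower-degree corrections to $\#[\emptyset,\lambda]$ produce only poles of order $<2k$ at $x=1$ and so do not affect $L_k$. A Riemann-sum / Laplace-transform argument, applied monomial-by-monomial in $V$, then yields the integral representation
$$L_k = \int_{\lambda_1\geq\dots\geq\lambda_k\geq 0} V(\lambda)\,e^{-\sum_i\lambda_i}\,d\lambda_1\cdots d\lambda_k.$$

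The combinatorial heart of the proof is evaluating this integral. Writing $V(\lambda)$ itself as $\int d\mu$ over its defining polytope and switching the order of integration expresses $L_k$ as a $2k$-dimensional integral over pairs of weakly decreasing sequences $(\mu,\lambda)$ with $\mu_i\leq\lambda_i$. I would decompose this region according to the ordering of the merged collection $\{\lambda_1,\dots,\lambda_k,\mu_1,\dots,\mu_k\}$: the monotonicity of $\mu,\lambda$ and the constraints $\mu_i\leq\lambda_i$ force the corresponding $\lambda/\mu$-label sequence to be a ballot sequence, so there are exactly $C_k$ (the Catalan number) chambers. On the chamber whose partial $\lambda$-counts are $c_1,\dots,c_{2k}$, the substitution $w_j=z_j-z_{j+1}$ on the merged decreasing sequence $z_1\geq\dots\geq z_{2k}\geq 0$ separates the exponential integral into $\prod_{m=1}^{2k}\int_0^\infty e^{-c_m w_m}dw_m=\prod_{m=1}^{2k}1/c_m$. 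A bijection then sends each ballot sequence to the partition $\lambda\in I_k$ whose parts are the $c_m$-values read off (in reverse order) at the $\mu$-positions; since the $c_m$ at the $\lambda$-positions are automatically $1,2,\dots,k$, this bijection satisfies $\prod_m c_m=k!\prod_i\lambda_i$, and the chamber sum becomes exactly $\tfrac{1}{k!}\sum_{\lambda\in I_k}\prod_i\lambda_i^{-1}$.

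The step I expect to be the main obstacle is the first one: verifying that the pole of $Q_k(x,\dots,x,1)$ at $x=1$ has order exactly $2k$ (controlling the cancellations in the rational function produced by the recursion of Theorem~\ref{T:main1}, and ensuring $x=1$ dominates over other roots of unity where poles of lower order may appear), and rigorously justifying the Tauberian passage to the integral formula for $L_k$. Once the integral representation is established, the chamber decomposition and the ballot-sequence $\leftrightarrow I_k$ bijection, while combinatorially delicate, are the cleanest parts of the argument.
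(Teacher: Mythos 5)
Your reduction of the theorem to the two statements ``$Q_k(x,\ldots,x,1)$ has a dominant pole of order exactly $2k$ at $x=1$'' and ``$L_k=\frac{1}{k!}\sum_{\lambda\in I_k}\prod_i\lambda_i^{-1}$'' is the same skeleton as Equation \eqref{Eq:A_km_calculation}, but the first statement is not a deferrable technicality: it is where the paper spends Propositions \ref{P:denominator} and \ref{P:Q_k^m_singularities}, and the induction there cannot be run on $Q_k(x,\ldots,x,1)$ alone, because in the recursion \eqref{Eq:main_recursion} the argument of $Q_{k-r}$ has first variable $y^rp_{r+1}$, which under $x_1=\cdots=x_k=x$, $y=1$ becomes a higher power of $x$; the paper is therefore forced to introduce the deformed family $Q_k(X_m)=Q_k(x^{m+1},x,\ldots,x,1)$ and induct on the pair $(k,m)$. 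Since you explicitly flag this step (pole order $2k$ at $x=1$, strict domination over the other roots of unity, and the Tauberian passage to the integral) as an unresolved obstacle, the proposal as written has a genuine gap. I would add that your own framework suggests a way to close it without the recursion: by the determinantal formula, $P_\lambda(1)$ is a single polynomial of degree $k$ in $\lambda_1,\ldots,\lambda_k$ with top homogeneous part $V(\lambda)$, so one may split $Q_k(x,\ldots,x,1)=\sum_\lambda V(\lambda)x^{|\lambda|}+\sum_\lambda E(\lambda)x^{|\lambda|}$ with $\deg E\le k-1$ and expand each monomial sum over the cone $\lambda_1\geq\cdots\geq\lambda_k\geq 1$ in the gap coordinates $g_i=\lambda_i-\lambda_{i+1}$; this yields pole order $k+\deg$ at $x=1$ and strictly smaller orders at primitive $d$-th roots of unity for $d\geq 2$ (only indices divisible by $d$ contribute), which gives both the order-$2k$ statement, the dominance needed for transfer, and the Laplace limit $L_k=\int_{\lambda_1\geq\cdots\geq\lambda_k\geq 0}V(\lambda)e^{-\sum_i\lambda_i}\,d\lambda$. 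But this must actually be carried out; asserting that lower-degree corrections ``do not affect $L_k$'' does not by itself rule out competing order-$2k$ poles elsewhere on $|x|=1$.

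Your evaluation of the constant, by contrast, is correct and genuinely different from the paper's. The paper never forms an integral: it derives the recursion \eqref{Eq:B_km_recursion} for $B(k,m)=\lim_{x\to1}(1-x)^{2k}Q_k(X_m)$ and matches it against the combinatorial quantity $\widetilde B(k,m)$ by decomposing the interval $I_{k,m}$ (Lemma \ref{L:sum2} and Equation \eqref{Eq:Ikm_decomp}) and using the concatenation bijection of Lemma \ref{L:I2alpha_beta}. You instead compute $B(k,0)$ in one shot: unfold $V(\lambda)$ as an inner integral over $\mu$, decompose the $2k$-dimensional domain by the interleaving pattern of $\{\lambda_i\}\cup\{\mu_j\}$ (the constraints force ballot words, giving $C_k$ chambers), integrate each chamber in the consecutive-difference variables to get $\prod_{m=1}^{2k}c_m^{-1}$, and use the standard bijection between ballot words and $I_k$, under which $\prod_m c_m=k!\prod_i\lambda_i$; I checked this chamber computation and it does give $\frac{1}{k!}\sum_{\lambda\in I_k}\prod_i\lambda_i^{-1}$, in agreement with $G_k$. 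Your route buys a direct, parameter-free derivation of the constant and a transparent reason why a Catalan-weighted sum appears; the paper's route buys the whole family $B(k,m)$, stays entirely within exact lattice combinatorics, and reuses the same recursion that proves rationality, at the cost of the less illuminating recursion-matching argument.
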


\begin{example}
For $k=3$, we have
$$G_3=\frac{1}{5\cdot 4\cdot 3}\left(\frac{1}{3\cdot 2\cdot 1}+\frac{1}{3\cdot 2\cdot 2}+\frac{1}{3\cdot 3\cdot 1}+\frac{1}{3\cdot 3\cdot 2}+\frac{1}{3\cdot 3\cdot 3}\right)=\frac{49}{6480}.$$
\end{example}
Table \ref{TBL:Gk} gives the values of $G_k$ for $k\leq 7.$
\begin{table}[h!]
    \begin{tabular}{ccccccc}
        \toprule
         $k=1$ & $k=2$ & $k=3$ & $k=4$ & $k=5$ & $k=6$ & $k=7$ \\
        \midrule
       1 &  $\frac{1}{8}$     & $\frac{49}{6480}$     & $\frac{1597}{5806080}$    &   $\frac{104797}{15552000000}$ & $\frac{30867157}{258660864000000}$ & $\frac{8883026474947}{5538476941949952000000}$\\
        \midrule
    \end{tabular}
    \caption{The first order asymptotic growth constant $G_k$.}
    \label{TBL:Gk}
\end{table}

\begin{rem}
The partitions in $I_k$ are in natural bijection with Dyck paths from $(0,0)$ to $(k,k)$.  Hence the first order growth constant $G_k$ given in Theorem \ref{T:main2} can be thought of as a weighted sum over Catalan objects.
\end{rem}

\begin{rem}\label{Rem:At_most_k}
Let $\ds \Lambda(\leq k):=\bigsqcup_{k'\leq k}\Lambda(k')$ denote the set of partitions of at most length $k$ and let $$A_{\leq k, n}:=\frac{C_{1,n}+C_{2,n}+\cdots +C_{k,n}}{c_{1,n}+c_{2,n}+\cdots +c_{k,n}}$$
denote the average cardinality of lower order ideals for partitions of rank $n$ in $\Lambda(\leq k)$.  It is easy to see that the asymptotic behavior of $\Lambda(\leq k)$ is dominated by $\Lambda(k)$ and hence
$\ds A_{\leq k,n}\sim G_k\cdot n^k$ as in Theorem \ref{T:main2}.
\end{rem}

Finally, let $$Q:=\sum_{k=0}^{\infty}Q_k=\sum_{\lambda\in\Lambda} P_\lambda(y)\cdot \textbf{x}^\lambda.$$
Note that $Q$ is a generating function in the variable $y$ and the infinite set of variables $x_1,x_2,\ldots$   Setting $y=0$, we recover the formula that
$$\sum_{\lambda\in\Lambda} \textbf{x}^\lambda=\sum_{k=0}^\infty\left( p_k\cdot\prod_{m=1}^k (1-p_m)^{-1}\right)=\prod_{k=1}^\infty (1-p_k)^{-1}$$
and further setting $x_k=x$ for all $k$ gives
$$\sum_{\lambda\in\Lambda} x^{|\lambda|}=\sum_{n=0}^\infty c_n\, x^n=\prod_{k=1}^\infty (1-x^k)^{-1}$$
where $c_n$ denotes the total number of partitions of rank $n$.  The asymptotic growth of $c_n$ was famously computed by Hardy and Ramanujan in \cite{Hardy-Ramanujan18}.  We do not know if Theorem \ref{T:main1} can be used to compute the asymptotic growth of the average cardinality of lower order ideals in all of Young lattice (with no restrictions on length).  We leave this as an open question.

\subsection{Connections with the geometry of Schubert varieties}\label{SS:Grassmannian}
In this section we give an overview of the connection between lower order ideals in Young's lattice with the geometry of Grassmannians and its Schubert subvarieties.  For more details on the combinatorics of Grassmannians see \cite{Fulton97}.  Let $\Gr(k,\C^n)$ denote the Grassmannian of $k$ dimensional subspaces in $\C^n$.   Fix a basis $\{e_1,\ldots, e_n\}$ of $\C^n$ and for $1\leq j\leq n$, define the subspace $E_j:=\Span \{e_1,\ldots,e_j\}$.  For any partition $\lambda$ whose Young diagram is contained in the $k\times (n-k)$ rectangle, define the Schubert cell
$$X^{\circ}_\lambda:=\{V\in\Gr(k,n)  \ |\ \dim(V\cap E_{i+\lambda_{k+1-i}})= i,\ \text{for}\ 1\leq i\leq k\}.$$
The union of Schubert cells over all partitions contained in a $k\times (n-k)$ rectangle gives a CW structure of $\Gr(k,\C^n)$.  Since the cell $X^{\circ}_\lambda$ is of real dimension $2|\lambda|$, we get that the Poincar\'{e} polynomial of $\Gr(k,\C^n)$ is Gaussian polynomial
$$\sum_{i=0}^{2k(n-k)} \dim(H_i(\Gr(k,\C^n),\Z))\ y^i=P_{(k)^{(n-k)}}(y^2)=\binom{n}{k}_{y^2}.$$
Here $H_i(\Gr(k,\C^n),\Z)$ denotes the $i$-th singular homology group of $\Gr(k,\C^n).$
The Schubert variety $X_\lambda$ is defined as the algebraic closure of $X^{\circ}_\lambda$ and the closure relations for Schubert cells is given by containment of Young diagrams.  In other words, $X^{\circ}_\mu\subseteq X_\lambda$ if and only if $\mu\in[\emptyset, \lambda]$.  Since Schubert cells also give a CW structure on Schubert varieties, homologically we have
$$\sum_{i=0}^{2|\lambda|} \dim(H_i(X_\lambda,\Z))\ y^i=P_\lambda(y^2).$$
Evaluating at $y=1$ gives
$$\#[\emptyset,\lambda]=P_\lambda(1)=\sum_{i=0}^{2|\lambda|} \dim(H_i(X_\lambda,\Z))=\dim(H_*(X_\lambda,\Z))$$
and hence $\#[\emptyset,\lambda]$ is equal to the rank of $H_*(X_\lambda,\Z)$ as a free $\Z$-module.  The following are geometric and homological interpretations of the numbers $A_{\leq k,n}$ given in Remark \ref{Rem:At_most_k} and asymptotically described in Theorem \ref{T:main2}.  Let $N$ be an integer such that $N \geq n+k.$  Then:

\begin{enumerate}
\item $A_{\leq k,n}$ is the average number of Schubert cells in the closure of a Schubert variety of complex dimension $n$ in the Grassmannian $\Gr(k,\C^N)$.

\item $A_{\leq k,n}$ is the average rank of total homology of a Schubert variety of complex dimension $n$ in the Grassmannian $\Gr(k,\C^N)$.
\end{enumerate}

\section{Recursive properties of partitions and the proof of Theorem \ref{T:main1}}
\ytableausetup {boxsize=0.75em}
\ytableausetup
{aligntableaux=top}

The goal of this section is to prove Theorem \ref{T:main1}.  We begin by giving several recursive properties of rank generating functions.  To visualize these properties we will represent a partition by its Young diagram in English notation.  When working with a pair of partitions $(\mu,\lambda)$ such that $\mu\leq\lambda$, we represent them by a skew Young diagram by shading the boxes of $\mu$ within the Young diagram of $\lambda$.

\begin{example}
For example, if $\mu=(3,2)$ and $\lambda=(5,3,2,1)$, then $(\mu,\lambda)$ is represented by the skew diagram
$$
\begin{tikzpicture}
\node (a3) at (3.5,0){\begin{ytableau}
*(gray)& *(gray)& *(gray) & & \\
*(gray)& *(gray) &\\
&\\
{}
\end{ytableau}};
\end{tikzpicture}
$$
\end{example}
For any $\mu\leq\lambda$, define the polynomial
$$P_{\mu,\lambda}(y):=\sum_{\nu\in[\mu,\lambda]} y^{|\nu|}.$$
If $\mu=\emptyset$, then $P_{\emptyset,\lambda}(y)=P_{\lambda}(y)$.  Note that $P_{\mu,\lambda}(y)$ is the rank generating function of the interval $[\mu,\lambda]$ scaled by $y^{|\mu|}$.  Through out this paper, we will use the skew Young diagram of $(\mu,\lambda)$ to also represent the corresponding interval $[\mu,\lambda]$ and polynomial $P_{\mu,\lambda}(y)$.

\begin{lem}\label{L:sum} Let $\lambda\in\Lambda(k)$ and $\mu\leq\lambda.$  Let $r>0$ be an index such that $\lambda_r>\lambda_{r+1}$ (here we set $\lambda_{k+1}=0$) and define the partitions $\overleftarrow{\lambda}$ and $\overrightarrow{\mu}$ by

$$
  \overleftarrow{\lambda}_i :=
  \begin{cases}
                                   \lambda_i & \text{ if }  i\neq r \\
                                   \lambda_i-1 & \text{ if }  i=r
  \end{cases}  \qquad   \text{and}\qquad
  \overrightarrow{\mu}_i :=
  \begin{cases}
                                   \max(\mu_i,\lambda_r)& \text{if } 1\leq i\leq r \\
                                   \mu_i & \text{if } i>r
    \end{cases}.$$
Then
$$P_{\mu,\lambda}(y)=P_{\mu,\overleftarrow{\lambda}}(y)+P_{\overrightarrow{\mu},\lambda}(y).$$
\end{lem}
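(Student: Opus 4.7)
The plan is to partition the interval $[\mu,\lambda]$ into two disjoint subintervals according to whether an element $\nu\in[\mu,\lambda]$ contains the outer corner box of $\lambda$ at position $(r,\lambda_r)$. The hypothesis $\lambda_r>\lambda_{r+1}$ ensures this is a genuine removable corner, so that $\overleftarrow{\lambda}$ is a valid partition and the combinatorics is clean.

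For each $\nu\in[\mu,\lambda]$, I split into two cases: either $\nu_r\leq \lambda_r-1$ (the corner box is missing from $\nu$) or $\nu_r\geq \lambda_r$ (the corner box is present in $\nu$). I would then verify that the first case coincides with the interval $[\mu,\overleftarrow{\lambda}]$. Since $\overleftarrow{\lambda}$ agrees with $\lambda$ in every coordinate except the $r$-th, the condition $\nu\leq \overleftarrow{\lambda}$ is equivalent to $\nu\leq\lambda$ together with $\nu_r\leq \lambda_r-1$, so the identification is immediate. The boundary case $\mu_r=\lambda_r$ makes both sides empty: the first case cannot occur because $\nu\geq\mu$ forces $\nu_r\geq\lambda_r$, while on the other side $\overleftarrow{\lambda}\not\geq\mu$ and so $P_{\mu,\overleftarrow{\lambda}}(y)=0$ by convention.

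For the second case I would verify the identification with $[\overrightarrow{\mu},\lambda]$. The weakly-decreasing property of $\nu$ combined with $\nu_r\geq\lambda_r$ forces $\nu_i\geq\lambda_r$ for all $i\leq r$; together with $\nu_i\geq\mu_i$ this is precisely $\nu_i\geq\max(\mu_i,\lambda_r)=\overrightarrow{\mu}_i$. Conversely $\overrightarrow{\mu}_r\geq\lambda_r$, so any $\nu\geq\overrightarrow{\mu}$ automatically lies in the second case. A small preliminary check is that $\overrightarrow{\mu}$ is itself a partition satisfying $\overrightarrow{\mu}\leq\lambda$: monotonicity at index $r$ uses $\lambda_r>\lambda_{r+1}\geq\mu_{r+1}$, and $\overrightarrow{\mu}_i\leq\lambda_i$ for $i\leq r$ uses $\lambda_r\leq\lambda_i$.

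Summing $y^{|\nu|}$ over the resulting disjoint decomposition of $[\mu,\lambda]$ yields the stated identity. The argument is essentially a clean case split on a single coordinate; the only part requiring care is the bookkeeping that ensures $\overleftarrow{\lambda}$ and $\overrightarrow{\mu}$ are valid partitions and that the boundary case $\mu_r=\lambda_r$ (where one of the intervals degenerates) is handled consistently on both sides of the identity.
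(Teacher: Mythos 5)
Your proof is correct and follows essentially the same route as the paper: split $[\mu,\lambda]$ according to whether $\nu_r=\lambda_r$ or $\nu_r\leq\lambda_r-1$, identify the two pieces with $[\overrightarrow{\mu},\lambda]$ and $[\mu,\overleftarrow{\lambda}]$, and sum $y^{|\nu|}$ over the disjoint union. Your extra checks (that $\overrightarrow{\mu}$ is a partition with $\overrightarrow{\mu}\leq\lambda$, and the degenerate case $\mu_r=\lambda_r$) merely spell out details the paper leaves implicit.
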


\begin{proof}
We partition the interval $[\mu,\lambda]$ by whether or not $\nu\in [\mu,\lambda]$ satisfies $\nu_r=\lambda_r$.  Indeed, if $\nu_r=\lambda_r$, then $\nu\geq \overrightarrow{\mu}$ which is equivalent to $\nu\in [\overrightarrow{\mu},\lambda]$.  If $\nu_r\neq\lambda_r$, then $\nu_r\leq \lambda_r-1$ which is equivalent to $\nu\in[\mu,\overleftarrow{\lambda}]$.  Thus  $[\mu,\lambda]=[\mu,\overleftarrow{\lambda}]\sqcup[\overrightarrow{\mu},\lambda]$.
\end{proof}
\begin{example}\label{example-sum}
Let $\mu=(3,2,2,1)$, $\lambda = (5,5,4,2,2)$ and $r=3$.  Lemma \ref{L:sum} implies:
$$P_{(3,2,2,1),(5,5,4,2,2)}(y)=P_{(3,2,2,1),(5,5,3,2,2)}(y)+P_{(3,3,3,1),(5,5,4,2,2)}(y).$$
Diagrammatically we have,
\[
\begin{tikzpicture}
\node (a2) at (2,0) {\begin{ytableau}
*(gray)&*(gray) &*(gray) &   & \\
*(gray)&*(gray) & &   & \\
*(gray) &   *(gray) &  &*(green)\\
*(gray)  &\\
&
\end{ytableau}};
\node (a2) at (3.25,0) {=};
\node (a2) at (4.5,0) {\begin{ytableau}
*(gray)&*(gray) &*(gray) &   & \\
*(gray)&*(gray) &  &   & \\
*(gray) &   *(gray) &  \\
*(gray)  &\\
&
\end{ytableau} };
\node (a2) at (5.75,0) {+};
\node (a2) at (7,0) {\begin{ytableau}
*(gray)&*(gray) &*(gray) &*(gray)  & \\
*(gray)&*(gray) &*(gray) &*(gray)   & \\
*(gray)&*(gray) &*(gray) &*(gray)    \\
*(gray) & &\\
&
\end{ytableau}};
\end{tikzpicture}
\]
Here the green box highlights the difference between $\lambda$ and $\overleftarrow{\lambda}$.
\end{example}

We remark that additive relations similar to Lemma \ref{L:sum} appear in \cite[Proposition 1.6 parts 4,5]{Ueno88}.  Next we give a multiplicative relationship on the polynomials $P_{\mu,\lambda}(y)$.  For any partition $\lambda=(\lambda_1\geq\cdots\geq \lambda_k)$ and $r\leq k$, define the subpartitions
$$\lambda(r):=(\lambda_1\geq\cdots\geq \lambda_r)\quad\text{and}\quad \lambda(r)^c:=(\lambda_{r+1}\geq\cdots\geq \lambda_k).$$

\begin{lem}\label{L:product}
Let $\mu\leq\lambda$ and suppose that $\mu_r=\lambda_r$ for some index $r$. Then $$P_{\mu,\lambda}(y)=P_{\mu(r),\lambda(r)}(y)\cdot P_{\mu(r)^c,\lambda(r)^c}(y).$$
\end{lem}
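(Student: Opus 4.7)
The plan is to establish a rank-preserving bijection between the interval $[\mu,\lambda]$ and the Cartesian product $[\mu(r),\lambda(r)]\times[\mu(r)^c,\lambda(r)^c]$, then translate this bijection into the claimed product identity of generating polynomials in $y$.

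The key observation is that the hypothesis $\mu_r=\lambda_r$ forces every intermediate partition $\nu\in[\mu,\lambda]$ to satisfy $\nu_r=\mu_r=\lambda_r$ as well, since $\mu_r\leq\nu_r\leq\lambda_r$.  This pins the $r$-th part of $\nu$ and suggests defining the map
$$\nu\longmapsto \bigl(\nu(r),\,\nu(r)^c\bigr),$$
where $\nu(r)=(\nu_1,\ldots,\nu_r)$ and $\nu(r)^c=(\nu_{r+1},\ldots,\nu_k)$.  I would first verify that $\nu(r)$ and $\nu(r)^c$ are genuine partitions (which is immediate, as each is a weakly decreasing sequence inherited from $\nu$), and that they lie in the required sub-intervals; this follows termwise from $\mu\leq\nu\leq\lambda$.

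For the inverse, given any pair $(\alpha,\beta)$ with $\alpha\in[\mu(r),\lambda(r)]$ and $\beta\in[\mu(r)^c,\lambda(r)^c]$, I would define $\nu$ by concatenation $\nu:=(\alpha_1,\ldots,\alpha_r,\beta_1,\ldots,\beta_{k-r})$ and check that this is actually a partition, i.e.\ that $\alpha_r\geq\beta_1$.  The needed inequality uses the hypothesis crucially: $\alpha_r\geq\mu_r=\lambda_r\geq\lambda_{r+1}\geq\beta_1$.  Termwise bounds then confirm $\mu\leq\nu\leq\lambda$, so the two maps are mutually inverse.

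Finally, since $|\nu|=|\nu(r)|+|\nu(r)^c|$, the bijection gives
$$P_{\mu,\lambda}(y)=\sum_{\nu\in[\mu,\lambda]}y^{|\nu|}=\sum_{\alpha,\beta}y^{|\alpha|+|\beta|}=P_{\mu(r),\lambda(r)}(y)\cdot P_{\mu(r)^c,\lambda(r)^c}(y),$$
as desired.  I do not anticipate any real obstacle here; the only subtle point is remembering to check that concatenation actually produces a partition, and this is precisely where the hypothesis $\mu_r=\lambda_r$ is used.
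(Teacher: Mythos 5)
Your proposal is correct and follows essentially the same argument as the paper: split $\nu$ into $(\nu(r),\nu(r)^c)$, use $\alpha_r\geq\mu_r=\lambda_r\geq\lambda_{r+1}\geq\beta_1$ to see that concatenation inverts the map, and conclude via additivity of rank. No issues.
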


\begin{proof}
Consider the poset map
$$\phi:[\mu,\lambda]\rightarrow[\mu(r),\lambda(r)]\times[\mu(r)^c,\lambda(r)^c]$$
given by $\phi(\nu)=(\nu(r),\nu(r)^c)$.  It is easy see that $\phi$ is injective.  To show that $\phi$ is surjective, let $(\nu',\nu'')\in [\mu(r),\lambda(r)]\times[\mu(r)^c,\lambda(r)^c]$.  Then $\nu'_r=\mu_r=\lambda_r\geq\lambda_{r+1}\geq\nu''_1$ and hence $(\nu',\nu'')$ can be viewed as an element in $[\mu,\lambda]$.  The lemma now follows from the fact that $y^{|\mu|}=y^{|\mu(r)|}\cdot y^{|\mu(r)^c|}$.
\end{proof}

\begin{example}\label{example-product}
Let $\mu = (4, 3, 3,2,1).$ and $\lambda = (5, 4, 3, 3, 2,2)$.  Then $\mu_3=\lambda_3$ and Lemma \ref{L:product} implies
$$P_{(4, 3, 3,2,1),(5, 4, 3, 3, 2,2)}(y)=P_{(4, 3, 3),(5, 4, 3)}(y)\cdot P_{(2,1),(3, 2,2)}(y).$$
\[
\begin{tikzpicture}
\node (a2) at (2,0) {\begin{ytableau}
*(gray)& *(gray)& *(gray)&*(gray) &\\
*(gray)& *(gray)& *(gray)& \\
*(gray)& *(gray)& *(gray) \\
*(gray)&*(gray) &  \\
*(gray)& \\
{} &
\end{ytableau}};
\node (a2) at (3.25,0) {=};
\node (a2) at (4.5,0) {\begin{ytableau}
*(gray)& *(gray)& *(gray)&*(gray) &\\
*(gray)& *(gray)& *(gray)& \\
*(gray)& *(gray)& *(gray)
\end{ytableau} };
\node (a2) at (5.65,0) {$\times$};
\node (a2) at (6.5,0) {\begin{ytableau}
*(gray)&*(gray) &  \\
*(gray)& \\
{} &
\end{ytableau}};
\draw [dashed, thick] (1,0) -- (2.8,0);
\end{tikzpicture}
\]

\end{example}
Observe that Lemmas \ref{L:sum} and \ref{L:product}, along with the fact that $P_{\lambda,\lambda}(y)=y^{|\lambda|}$, can be used to recursively compute any polynomial $P_{\mu,\lambda}(y).$

To prove Theorem \ref{T:main1}, we refine the generating function $Q_k$ into partial sums over partitions where last part has a fixed value.  For $m\geq 1$, define the set $$\Lambda(k,m):=\{\lambda\in \Lambda(k)\ |\ \lambda_k=m\}$$ and the function
$$Q_{k,m}:=\sum_{\lambda\in \Lambda(k,m)} P_\lambda(y)\cdot \textbf{x}^{\lambda}.$$
It is easy to see that $\ds Q_k=\sum_{m\geq 1}Q_{k,m}.$ For any partition $\lambda=(\lambda_1\geq\cdots\geq\lambda_k)$, define $$\rho(\lambda):=(\lambda_1-1, \lambda_2-1,\ldots, \lambda_k-1).$$  For $m\geq 2$, the map $\rho$ is gives a bijection
$$\rho:\Lambda(k,m)\rightarrow\Lambda(k,m-1),$$ and for $m\geq 1$, the composition $\rho^m$ gives a bijection
$$\rho^m:\Lambda(k,m)\rightarrow \bigsqcup_{k'<k}\Lambda(k').$$  We now give a recursive formula for $Q_{k,1}.$

\begin{prop}\label{P:Q_k1}
Let $k\geq 1$.  Then $\ds Q_{k,1}=x_k\cdot Q_{k-1}+y^k p_k\cdot\left(\sum_{j=0}^{k-1} Q_j\right).$
\end{prop}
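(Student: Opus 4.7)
The plan is to apply Lemma \ref{L:sum} with $\mu = \emptyset$ and $r = k$ to each $\lambda \in \Lambda(k,1)$, then reorganize the resulting pieces using the bijection $\rho$ discussed just above the proposition. Since $\lambda_k = 1 > 0 = \lambda_{k+1}$, Lemma \ref{L:sum} applies with $r = k$; a direct calculation gives $\overleftarrow{\lambda} = (\lambda_1, \ldots, \lambda_{k-1})$ (which is a partition in $\Lambda(k-1)$ because $\lambda_{k-1} \geq 1$) and $\overrightarrow{\emptyset} = (1^k)$. Hence
$$P_\lambda(y) = P_{\lambda(k-1)}(y) + P_{(1^k), \lambda}(y).$$
Moreover the shift $\mu \mapsto \rho(\mu)$ identifies $[(1^k), \lambda]$ with $[\emptyset, \rho(\lambda)]$ and decreases the rank by exactly $k$, so $P_{(1^k), \lambda}(y) = y^k \cdot P_{\rho(\lambda)}(y)$.

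Next I would parametrize $\lambda \in \Lambda(k,1)$ as $\lambda = (\nu, 1)$ with $\nu \in \Lambda(k-1)$, so that $\textbf{x}^\lambda = x_k \cdot \textbf{x}^\nu$ and $\lambda(k-1) = \nu$. Plugging the two-term decomposition of $P_\lambda(y)$ into the definition of $Q_{k,1}$ yields
$$Q_{k,1} = x_k \sum_{\nu \in \Lambda(k-1)} P_\nu(y) \textbf{x}^\nu + y^k \sum_{\lambda \in \Lambda(k,1)} P_{\rho(\lambda)}(y) \textbf{x}^\lambda = x_k Q_{k-1} + y^k S,$$
where $S$ abbreviates the second sum. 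To evaluate $S$, I would invoke the bijection $\rho : \Lambda(k,1) \to \bigsqcup_{j < k} \Lambda(j)$ noted in the text. The preimage of $\tilde{\nu} \in \Lambda(j)$ is $\lambda = (\tilde{\nu}_1 + 1, \ldots, \tilde{\nu}_j + 1, 1, \ldots, 1)$ with $k - j$ trailing ones, giving the clean factorization
$$\textbf{x}^\lambda = \Bigl(\prod_{i=1}^j x_i^{\tilde{\nu}_i + 1}\Bigr) \prod_{i=j+1}^k x_i = p_k \cdot \textbf{x}^{\tilde{\nu}}.$$
Consequently $S = p_k \sum_{j=0}^{k-1} Q_j$, and combining the pieces delivers the proposition.

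The main obstacle is precisely the bookkeeping in the last displayed identity: the $k$ factors of $p_k$ must account simultaneously for the $j$ subtracted ones produced by $\rho$ on the nonzero parts of $\lambda$ and for the $k - j$ trailing ones of $\lambda$ that are absent from $\tilde{\nu}$. Once this matching is pinned down, everything else is a routine application of Lemma \ref{L:sum} and the rank-shift bijection, and no further obstacle is expected.
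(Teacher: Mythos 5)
Your proposal is correct and follows essentially the same route as the paper: apply Lemma \ref{L:sum} with $\mu=\emptyset$, $r=k$ to split $P_\lambda(y)$ as $P_{\lambda(k-1)}(y)+y^k P_{\rho(\lambda)}(y)$, then sum over $\Lambda(k,1)$ using the two bijections $\lambda\mapsto\lambda(k-1)$ and $\lambda\mapsto\rho(\lambda)$ together with $\textbf{x}^\lambda=x_k\textbf{x}^{\lambda(k-1)}=p_k\textbf{x}^{\rho(\lambda)}$. The only cosmetic difference is that you justify $P_{(1^k),\lambda}(y)=y^k P_{\rho(\lambda)}(y)$ by a direct rank-shift argument, where the paper cites Lemma \ref{L:product}; the bookkeeping you flag as the main obstacle is handled correctly.
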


\begin{proof}
Let $\lambda\in\Lambda(k,1)$.  By Lemmas \ref{L:sum} and \ref{L:product}, we have $$P_\lambda(y)=P_{\lambda(k-1)}(y)+P_{(1)^k, \lambda}(y)=P_{\lambda(k-1)}(y)+y^k\cdot P_{\rho(\lambda)}(y).$$
Moreover, the maps $$\Lambda(k,1)\rightarrow \Lambda(k-1)\quad\text{and}\quad \Lambda(k,1)\rightarrow \bigsqcup_{k'<k}\Lambda(k')$$ given respectively by $\lambda\mapsto \lambda(k-1)$ and $\lambda\mapsto \rho(\lambda)$ are bijections.  Hence

\begin{align*}
Q_{k,1} &=\sum_{\lambda\in\Lambda(k,1)} P_\lambda(y)\, \textbf{x}^\lambda\\
        &=\sum_{\lambda\in\Lambda(k,1)} P_{\lambda(k-1)}(y)\, \textbf{x}^\lambda+\sum_{\lambda\in\Lambda(k,1)} y^k\cdot P_{\rho(\lambda)}(y)\, \textbf{x}^\lambda\\
        &=\sum_{\lambda'\in\Lambda(k-1)} x_k\cdot P_{\lambda'}(y)\, \textbf{x}^{\lambda'}+  \sum_{j=0}^{k-1}\left(\sum_{\lambda''\in\Lambda(j)}y^k p_k\cdot P_{\lambda''} (y)\, \textbf{x}^{\lambda''}\right)\\
        &=x_k\cdot Q_{k-1}+y^k p_k\left(\sum_{j=0}^{k-1} Q_j\right). \end{align*} \end{proof}

We now consider $Q_{k,m}$ for $m\geq 2.$
\begin{lem}\label{L:product_sum}
If $\lambda\in\Lambda(k)$, then $$\ds P_\lambda(y)=P_{\rho(\lambda)}(y)+P_{(\lambda_k)^k, \lambda}(y)+\sum_{r=1}^{k-1} P_{(\lambda_r)^r, \lambda(r)}(y)\cdot P_{\rho(\lambda(r)^c)}(y).$$
\end{lem}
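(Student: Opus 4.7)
The plan is to partition the interval $[\emptyset,\lambda]$ according to the largest index at which $\nu$ agrees with $\lambda$, and show that each resulting block contributes exactly one term on the right-hand side. For $\nu \in [\emptyset,\lambda]$, set $r^*(\nu) := \max\{r \in \{1,\dots,k\} : \nu_r = \lambda_r\}$, with the convention $r^*(\nu) = 0$ if no such $r$ exists. This partitions $[\emptyset,\lambda]$ into $k+1$ (possibly empty) classes indexed by $r = 0, 1, \dots, k$.

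The two extreme classes are handled directly. For $r^* = 0$, the condition $\nu_i < \lambda_i$ for every $i$ is equivalent to $\nu \le \rho(\lambda)$, so this class contributes $P_{\rho(\lambda)}(y)$. For $r^* = k$, the equality $\nu_k = \lambda_k$ forces $\nu_i \ge \lambda_k$ for every $i \le k$ by monotonicity of $\nu$, so this class is precisely $[(\lambda_k)^k, \lambda]$ and contributes $P_{(\lambda_k)^k,\lambda}(y)$.

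For the intermediate case $r^* = r$ with $1 \le r \le k-1$, I will build a bijection $\phi(\nu) := (\nu(r), \nu(r)^c)$ onto $[(\lambda_r)^r, \lambda(r)] \times [\emptyset, \rho(\lambda(r)^c)]$. The forward direction uses that $\nu_r = \lambda_r$ with monotonicity forces $\nu(r) \in [(\lambda_r)^r,\lambda(r)]$, while $\nu_i \le \lambda_i - 1$ for $i > r$ places $\nu(r)^c$ in $[\emptyset, \rho(\lambda(r)^c)]$. For the reverse direction, given a pair $(\mu', \mu'')$ in the product, their concatenation is a legitimate partition because $\mu'_r = \lambda_r \ge \lambda_{r+1} > \lambda_{r+1} - 1 \ge \mu''_1$. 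Using $|\nu| = |\nu(r)| + |\nu(r)^c|$, the rank generating function for this class is $P_{(\lambda_r)^r,\lambda(r)}(y) \cdot P_{\rho(\lambda(r)^c)}(y)$.

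The main thing to verify carefully is the bijection in the intermediate case — in particular that $r$ is genuinely the maximum agreement index for the reconstructed $\nu$, so that no $\nu \in [\emptyset,\lambda]$ is counted in two different classes. Once this is established, summing the contributions over $r = 0, 1, \dots, k$ yields the identity. The same decomposition could alternatively be derived by iterating Lemma \ref{L:sum} at indices $r = k, k-1, \dots, 1$ and factoring each resulting ``top'' interval via Lemma \ref{L:product}, but the direct partition argument avoids tracking the auxiliary partitions $\overleftarrow{\lambda}$ and $\overrightarrow{\mu}$ at each step.
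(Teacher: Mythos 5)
Your proof is correct and in substance coincides with the paper's argument: your classes $r^*(\nu)=r$ are exactly the intervals $[(\lambda_r)^r,\mu^r]$ (with $\mu^r_i=\lambda_i$ for $i\leq r$ and $\lambda_i-1$ for $i>r$) that the paper obtains by telescoping Lemma \ref{L:sum}, and your bijection $\nu\mapsto(\nu(r),\nu(r)^c)$ is Lemma \ref{L:product} specialized to the case $\mu_r=\lambda_r$. The only difference is packaging: you verify the block decomposition and the product factorization directly rather than citing the two lemmas, as you yourself observe at the end.
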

\begin{proof}
For any $0\leq r\leq k$, define the partition $\mu^r=(\mu^r_1\geq \ldots\geq\mu^r_k)$ by
$$\mu^r_i:=\begin{cases}
\lambda_i &\text{if}\quad i\leq r\\
\lambda_i-1 & \text{if}\quad i> r.
\end{cases}$$
Note that for the extreme values  of $k$, we have $\mu^{k}=\lambda$ and $\mu^0=\rho(\lambda)$.  By construction, $\mu^r_{r}>\mu^r_{r+1}$ and hence Lemma \ref{L:sum} gives the recursive formula
$$P_{\mu^r}(y)=P_{\mu^{r-1}}(y)+P_{(\lambda_{r})^{r}, \mu^r}(y).$$
Since $\lambda=\mu^{k}$, we have
$$P_{\lambda}(y)=\sum_{r=0}^k P_{(\lambda_{r})^{r}, \mu^r}(y)=P_{\rho(\lambda)}(y)+P_{(\lambda_k)^k, \lambda}(y)+\sum_{r=1}^{k-1} P_{(\lambda_{r})^{r}, \mu^r}(y).$$
For each $1<r<k$, we have $\mu^r_r=\lambda_r$.  Applying Lemma \ref{L:product} gives
$$P_{(\lambda_{r})^{r}, \mu^r}(y)=P_{(\lambda_{r})^{r}, \mu^r(r)}(y)\cdot P_{\mu^r(r)^c}(y)=P_{(\lambda_{r})^{r}, \lambda(r)}(y)\cdot P_{\rho(\lambda(r)^c)}(y).$$
This proves the lemma.

\end{proof}
Lemma \ref{L:product_sum} implies
\begin{align*}
Q_{k,m}=&\sum_{\lambda\in \Lambda(k,m)} P_\lambda(y)\cdot \textbf{x}^{\lambda}\\
=&\sum_{\lambda\in \Lambda(k,m)} P_{\rho(\lambda)}(y)\cdot\textbf{x}^{\lambda} \\
&+\sum_{\lambda\in \Lambda(k,m)} P_{(\lambda_k)^k, \lambda}(y)\cdot\textbf{x}^{\lambda}\\
&+ \sum_{\lambda\in \Lambda(k,m)}\left(\sum_{r=1}^{k-1} P_{(\lambda_r)^r, \lambda(r)}(y)\cdot P_{\rho(\lambda(r)^c)}(y)\right) \cdot \textbf{x}^{\lambda}\\
\end{align*}
We consider the three summands above separately.  For the first summand, recall that for $m\geq 2$, the map $\rho:\Lambda(k,m)\rightarrow\Lambda(k,m-1)$ is a bijection and $\textbf{x}^\lambda=p_k\cdot\textbf{x}^{\rho(\lambda)}$.  Hence
\begin{equation}\label{Eq:1st_summand}
\sum_{\lambda\in \Lambda(k,m)} P_{\rho(\lambda)}(y)\cdot \textbf{x}^{\lambda}=\sum_{\lambda\in \Lambda(k,m)} P_{\rho(\lambda)}(y)\cdot p_k\cdot\textbf{x}^{\rho(\lambda)}=p_k\cdot Q_{k,m-1}.
\end{equation}
For the second summand, recall that $\rho^m:\Lambda(k,m)\rightarrow \bigsqcup_{i<k}\Lambda(i)$ is also a bijection and $\textbf{x}^\lambda=p_k^m\cdot \textbf{x}^{\rho^m(\lambda)}$.  Moreover, by Lemma \ref{L:product}, $P_{(\lambda_k)^k, \lambda}(y)=y^{mk}\cdot P_{\rho^m(\lambda)}(y)$ and hence
\begin{equation}\label{Eq:2nd_summand}
\sum_{\lambda\in \Lambda(k,m)} P_{(\lambda_k)^k, \lambda}(y)\cdot\textbf{x}^{\lambda}=\sum_{\lambda\in \Lambda(k,m)} \left(y^{mk}\cdot P_{\rho^m(\lambda)}(y)\right)\cdot \left(p_k^m\cdot \textbf{x}^{\rho^m(\lambda)}\right)=(y^k p_k)^m\sum_{i=0}^{k-1}Q_{i}.
\end{equation}
Finally, for the third summand, we switch the order of summation and focus on expressions of the form
$$\sum_{\lambda\in \Lambda(k,m)} P_{(\lambda_r)^r, \lambda(r)}(y)\cdot P_{\rho(\lambda(r)^c)}(y)\cdot \textbf{x}^{\lambda}$$
for fixed values of $r$.
\begin{prop}\label{P:3rd_summand}
Let $m\geq 2.$  If $1\leq r< k$, then $$\sum_{\lambda\in \Lambda(k,m)} P_{(\lambda_r)^r, \lambda(r)}(y)\cdot P_{\rho(\lambda(r)^c)}(y)\cdot \textbf{\emph{x}}^{\lambda}=\left(\frac{y^r p_k}{1-y^r p_r}\right)\left(\sum_{i=0}^{r-1} Q_i\right) Q_{k-r, m-1}(Z_r)$$
where $Z_r:= \left(y^rp_{r+1},x_{r+2},\cdots,x_k,y \right).$
\end{prop}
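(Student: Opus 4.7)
The plan is to reparametrize each $\lambda\in\Lambda(k,m)$ by three independent pieces of data and show that the summand factors in a way compatible with the right-hand side. For $\lambda$ with $\lambda_r=a$, set $\alpha:=(\lambda_1-a,\ldots,\lambda_{r-1}-a)$ (a weakly decreasing non-negative sequence of length $r-1$, possibly with trailing zeros) and $\mu:=(\lambda_{r+1},\ldots,\lambda_k)\in\Lambda(k-r,m)$. The assignment $\lambda\leftrightarrow(a,\alpha,\mu)$ is a bijection onto triples satisfying $a\geq \mu_1$, with $a\geq m$ automatic. Writing $\mu':=\rho(\mu)\in\Lambda(k-r,m-1)$, we have $\mu_1=\mu'_1+1$ and $P_{\rho(\lambda(r)^c)}(y)=P_{\mu'}(y)$; the hypothesis $m\geq 2$ ensures $\mu'$ is a genuine partition.

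The next step is to rewrite the summand in these coordinates. Since every element of $[(a)^r,\lambda(r)]$ has its $r$-th part forced to equal $a$, the shift by $-a$ on the first $r-1$ coordinates gives a bijection with $[\emptyset,\alpha]$ with rank offset $ra$, so $P_{(\lambda_r)^r,\lambda(r)}(y)=y^{ra}P_{\alpha}(y)$. The monomial factors as
\[
\textbf{x}^\lambda=p_r^a\cdot x_1^{\alpha_1}\cdots x_{r-1}^{\alpha_{r-1}}\cdot (x_{r+1}\cdots x_k)\cdot x_{r+1}^{\mu'_1}\cdots x_k^{\mu'_{k-r}}.
\]
Collecting the pieces and noting that summing $P_\alpha(y)\,x_1^{\alpha_1}\cdots x_{r-1}^{\alpha_{r-1}}$ over weakly decreasing non-negative sequences of length $r-1$ yields exactly $\sum_{j=0}^{r-1} Q_j$ (sequences whose last $r-1-j$ entries are zero correspond to partitions in $\Lambda(j)$), the left-hand side of the proposition becomes
\[
\left(\sum_{j=0}^{r-1}Q_j\right)\cdot (x_{r+1}\cdots x_k)\cdot\sum_{\mu'}\sum_{a\geq \mu'_1+1}(y^rp_r)^a\,P_{\mu'}(y)\,x_{r+1}^{\mu'_1}\cdots x_k^{\mu'_{k-r}}.
\]

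Finally, I will swap the order of summation, evaluate the geometric series $\sum_{a\geq\mu'_1+1}(y^rp_r)^a=(y^rp_r)^{\mu'_1+1}/(1-y^rp_r)$ (the condition $a\geq m$ is automatic because $\mu'_{k-r}=m-1$ forces $\mu'_1+1\geq m$), and use $y^rp_r\cdot x_{r+1}=y^rp_{r+1}$ together with $p_r\cdot(x_{r+1}\cdots x_k)=p_k$. The residual factor $(y^rp_r)^{\mu'_1}\cdot x_{r+1}^{\mu'_1}=(y^rp_{r+1})^{\mu'_1}$ combines with $x_{r+2}^{\mu'_2}\cdots x_k^{\mu'_{k-r}}$ to produce $\textbf{x}^{\mu'}$ under the substitution $x_1\mapsto y^rp_{r+1}$, $x_i\mapsto x_{r+i}$ for $i\geq 2$; this is exactly the definition of $Z_r$, so the inner sum equals $Q_{k-r,m-1}(Z_r)$. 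The main technical step is matching this substitution to $Z_r$; once that identification is made, the remaining prefactor $\frac{y^rp_k}{1-y^rp_r}$ falls out, and the rest is bookkeeping.
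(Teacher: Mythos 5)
Your proposal is correct and follows essentially the same route as the paper's proof: your reparametrization of $\lambda$ by $(a,\alpha,\mu')$ is exactly the paper's bijection $\lambda\mapsto(\rho^t(\lambda(r)),\rho(\lambda(r)^c))$ with $t=\lambda_r$, and both arguments then swap the order of summation, evaluate the geometric series in $y^rp_r$, and absorb the shifted first part into the substitution $Z_r$. The bookkeeping details (the factorization of $\textbf{x}^\lambda$, the identity $P_{(a)^r,\lambda(r)}(y)=y^{ra}P_\alpha(y)$, and the observation that $a\geq m$ is automatic) all match the paper's computation.
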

\begin{proof}

First, note that
$$\sum_{\lambda\in \Lambda(k,m)} P_{(\lambda_r)^r, \lambda(r)}(y)\cdot P_{\rho(\lambda(r)^c)}(y)\cdot \textbf{x}^{\lambda}= \sum_{t\geq m}\left(\sum_{\substack{\lambda\in \Lambda(k,m)\\ \lambda_r=t}} P_{(t)^r, \lambda(r)}(y)\cdot P_{\rho(\lambda(r)^c)}(y)\cdot \textbf{x}^{\lambda}\right)$$
For $t\geq m$, define the sets $$\Lambda(r,k;t,m):=\{\lambda\in \Lambda(k,m)\ |\ \lambda_r=t\},$$
$$\bar\Lambda(r):=\{\lambda\in \Lambda(r')\ |\ r'<r\},\quad\text{and}\quad\Lambda_{t}(k,m):=\{\lambda\in \Lambda(k,m)\ |\ \lambda_1<t\}.$$
Consider the map given by $\Lambda(r,k;t,m)\rightarrow \bar\Lambda(r)\times\Lambda_t(k-r, m-1)$ given by $$\lambda\mapsto (\rho^t(\lambda(r)), \rho(\lambda(r)^c)).$$
In other words,
$$\rho^t(\lambda(r))=(\lambda_1-t,\lambda_2-t,\ldots, \lambda_r-t)\quad\text{and}\quad\rho(\lambda(r)^c)=(\lambda_{r+1}-1,\ldots,\lambda_k-1).$$
For example, if $\lambda=(10, 8, 7,5,4)$ and $r=3$, then $t=7$ and
$$(10, 8, 7,5,4)\mapsto ((3,1,0),(4,3))$$
$$
\begin{tikzpicture}
\node (a1) at (0,0) { \begin{ytableau}
*(white)& *(white)& *(white)& *(white)& *(white)& *(white)& *(white)&  *(green)& *(green)& *(green)\\
*(white)& *(white)& *(white)& *(white)& *(white)& *(white)& *(white)&  *(green)  \\
*(white)& *(white)& *(white)& *(white)& *(white)& *(white)& *(white)  \\
*(blue)& *(blue) & *(blue) & *(blue) & *(gray)\\
*(blue) & *(blue)& *(blue) & *(gray)
\end{ytableau}};
\node (a3) at (3,0) {$\longmapsto $};
\node (a2) at (6,0) {$ \left(
\begin{ytableau}
*(green)& *(green)& *(green)\\
*(green)
\end{ytableau}\,,\,
\begin{ytableau}
*(blue)& *(blue) & *(blue) & *(blue) \\
*(blue) & *(blue)& *(blue)
\end{ytableau}\right)  $};
\end{tikzpicture}
$$
It is easy to see that this map is a bijection and for notational simplicity, define $\lambda':=\rho^t(\lambda(r))$ and $\lambda'':=\rho(\lambda(r)^c)$ for $\lambda\in\Lambda(r,k;t,m)$.  Observe that the monomial $\textbf{x}^\lambda$ factors as
$$\textbf{x}^\lambda=\textbf{x}^{\lambda'}\cdot(p_r)^{t}\cdot(x_{r+1}^{\lambda_{r+1}}\cdots x_{k}^{\lambda_{k}}).$$
We now have
\begin{align*}
\sum_{\lambda\in \Lambda(r,k; t, m)} &P_{(t)^r, \lambda(r)}(y) \cdot P_{\rho(\lambda(r)^c)}(y)\cdot \textbf{x}^{\lambda}\\
&=\sum_{(\lambda',\lambda'')\in \bar\Lambda(r)\times \Lambda_t(k-r,m-1)}y^{rt}\cdot P_{\lambda'}(y)\cdot P_{\lambda''}(y)\cdot\textbf{x}^{\lambda'}\cdot(p_r)^{t}\cdot(x_{r+1}^{\lambda_{r+1}}\cdots x_{k}^{\lambda_{k}})\\
&= \sum_{\lambda'\in \bar\Lambda(r)} P_{\lambda'}(y)\cdot\textbf{x}^{\lambda'}\cdot\left( \sum_{\lambda''\in \Lambda_t(k-r,m-1)}(y^{r}p_r)^{t}\cdot P_{\lambda''}(y)\cdot (x_{r+1}^{\lambda_{r+1}}\cdots x_{k}^{\lambda_{k}})\right)\\
&= \left(\sum_{i=0}^{r-1} Q_i\right)\cdot\left( \sum_{\lambda''\in \Lambda_t(k-r,m-1)}(y^{r}p_r)^{t}\cdot P_{\lambda''}(y)\cdot (x_{r+1}^{\lambda_{r+1}}\cdots x_{k}^{\lambda_{k}})\right).
\end{align*}
Summing the second factor above over $t\geq m$ gives:

\begin{align*}
\sum_{t\geq m}\Bigg(\sum_{\lambda''\in \Lambda_t(k-r,m-1)}&(y^{r}p_r)^{t}\cdot P_{\lambda''}(y)\cdot (x_{r+1}^{\lambda_{r+1}}\cdots x_{k}^{\lambda_{k}})\Bigg)\\
&= \sum_{\lambda''\in \Lambda(k-r,m-1)}\left(\sum_{t> \lambda''_1} (y^{r}p_r)^{t}\cdot P_{\lambda''}(y)\cdot (x_{r+1}^{\lambda_{r+1}}\cdots x_{k}^{\lambda_{k}})\right)\\
&= \left(\frac{1}{1-y^rp_r}\right)\sum_{\lambda''\in \Lambda(k-r,m-1)}(y^{r}p_r)^{\lambda''_1+1}\cdot P_{\lambda''}(y)\cdot (x_{r+1}^{\lambda_{r+1}}\cdots x_{k}^{\lambda_{k}})\\
&= \left(\frac{y^rp_k}{1-y^rp_r}\right)\sum_{\lambda''\in \Lambda(k-r,m-1)} P_{\lambda''}(y)\cdot (y^{r}p_{r+1})^{\lambda''_{1}}\cdot x_{r+2}^{\lambda''_2}\cdots x_{k}^{\lambda''_{k-r}}\\
&= \left(\frac{y^rp_k}{1-y^rp_r}\right)Q_{k-r,m-1}(Z_r)
\end{align*}

where $Z_r=(y^{r}p_{r+1}, x_{r+2},\ldots,x_k,y)$.  Combining the two calculations above proves the lemma.\end{proof}

We now prove our first main theorem.

\begin{proof}[Proof of Theorem \ref{T:main1}]
By Proposition \ref{P:Q_k1}, we have
$$\ds Q_{k,1}=x_k\cdot Q_{k-1}+y^k p_k\left(\sum_{i=0}^{k-1} Q_i\right).$$

For $m\geq 2$, Equations \eqref{Eq:1st_summand} and \eqref{Eq:2nd_summand} and Proposition \ref{P:3rd_summand} imply
$$Q_{k,m}=p_k\cdot Q_{k,m-1}+(y^k p_k)^m\left(\sum_{i=0}^{k-1} Q_i\right) + \sum_{r=1}^{k-1}\left(\frac{y^r p_k}{1-y^r p_r}\right)\left(\sum_{i=0}^{r-1} Q_i\right) Q_{k-r, m-1}(Z_r).$$
Combining these results we get

\begin{align*}
Q_k&=\sum_{m=1}^{\infty} Q_{k,m}=Q_{k,1}+\sum_{m=2}^{\infty} Q_{k,m}\\
&=x_k\cdot Q_{k-1}+p_k \sum_{m=2}^{\infty}Q_{k,m-1}+(y^k p_k)\left(\sum_{i=0}^{k-1} Q_i\right)+\sum_{m=2}^{\infty}(y^k p_k)^m\left(\sum_{i=0}^{k-1} Q_i\right)\\
&\hspace{2in}+\sum_{m=2}^{\infty}\sum_{r=1}^{k-1}\left(\frac{y^r p_k}{1-y^r p_r}\right)\left(\sum_{i=0}^{r-1} Q_i\right) Q_{k-r, m-1}(Z_r)\\
&=x_k\cdot Q_{k-1}+p_k\cdot Q_k+\left(\frac{y^k p_k}{1-y^k p_k}\right)\left(\sum_{i=0}^{r-1} Q_i\right)\\
&\hspace{2in}+\sum_{r=1}^{k-1}\left(\frac{y^r p_k}{1-y^r p_r}\right)\left(\sum_{i=0}^{r-1} Q_i\right)\left( \sum_{m=2}^{\infty} Q_{k-r, m-1}(Z_r)\right)\\ \end{align*}

\begin{align*}
&=x_k\cdot Q_{k-1}+p_k\cdot Q_k+\sum_{r=1}^k\left(\frac{y^r p_k}{1-y^r p_r}\right)\left(\sum_{i=0}^{r-1} Q_i\right)Q_{k-r}(Z_r).
\end{align*}
\end{proof}

\section{Asymptotics of partition intervals}
In this section we study the asymptotic behavior of various degenerations of the generating series $Q_k$ and prove Theorem \ref{T:main2}.  For any $i<r$, define
\begin{equation*}\label{Eq:Main_theorem_summands_Rir}
R_{i,r}=R_{i,r}(x_1,\ldots,x_k,y):=\frac{(y^rp_k)\cdot Q_i\cdot Q_{k-r}(Z_r)}{(1-p_k)(1-y^rp_r)},
\end{equation*}
where $Z_r=(p_{r+1}y^r, x_{r+2},\ldots,x_k,y)$.  Theorem \ref{T:main1} states that
\begin{equation}\label{Eq:Main_theorem_summands}
Q_k=\frac{x_kQ_{k-1}}{(1-p_k)}+\sum_{0\leq i<r\leq k} R_{i,r}.
\end{equation}
We first study the singularities of $Q_k.$  Define the polynomial
$$D_k=D_k(x_1,\ldots,x_k,y):=\prod_{m=1}^k\prod_{j=0}^m(1-y^j p_m).$$
\begin{prop}\label{P:denominator}
The product $Q_k\cdot D_k$ is a polynomial.
\end{prop}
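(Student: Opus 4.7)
My plan is to proceed by strong induction on $k$. The base case $k=0$ is immediate since $Q_0 = 1$ and $D_0 = 1$ (empty product). For the inductive step, I would rewrite the recursion of Theorem \ref{T:main1} as
$$Q_k = \frac{x_k Q_{k-1}}{1-p_k} + \sum_{0 \leq i < r \leq k} \frac{y^r p_k \cdot Q_i \cdot Q_{k-r}(Z_r)}{(1-p_k)(1-y^r p_r)},$$
and show that each summand, viewed as a rational function, has denominator dividing $D_k$; combining over a common denominator then yields that $Q_k\cdot D_k$ is polynomial.

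The first summand is straightforward: by induction, $Q_{k-1}$ has denominator dividing $D_{k-1}$, so $\frac{x_k Q_{k-1}}{1-p_k}$ has denominator dividing $(1-p_k)\cdot D_{k-1}$, a subproduct of $D_k$ (the factor $(1-p_k)$ being the $m=k$, $j=0$ term).

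The main obstacle will be analyzing the specialization $Q_{k-r}(Z_r)$ appearing in the general summand, where $Z_r = (y^r p_{r+1}, x_{r+2}, \ldots, x_k, y)$. I would first compute directly that under this substitution, $z_1 z_2 \cdots z_m = (y^r p_{r+1})\, x_{r+2}\cdots x_{r+m} = y^r p_{r+m}$, so each factor $(1-y^j p_m)$ of $D_{k-r}$ becomes $(1-y^{j+r} p_{r+m})$. Reindexing with $m' = r+m$ and $j' = j+r$ yields
$$D_{k-r}(Z_r) = \prod_{m'=r+1}^{k} \prod_{j'=r}^{m'} (1-y^{j'} p_{m'}),$$
which is visibly a subproduct of $D_k$. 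Hence, by the inductive hypothesis applied to $k-r < k$, the specialized product $Q_{k-r}(Z_r)\cdot D_{k-r}(Z_r)$ remains polynomial.

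It then remains to verify that the full denominator of the $(i,r)$-summand, namely $D_i \cdot D_{k-r}(Z_r) \cdot (1-p_k) \cdot (1-y^r p_r)$, is a subproduct of $D_k$ with no repeated factors. This reduces to a bookkeeping check on indices: the factors of $D_i$ have $m \leq i \leq r-1$, disjoint from $m' \geq r+1$ in $D_{k-r}(Z_r)$; the factor $(1-p_k)$ has $(m,j) = (k,0)$, absent from $D_{k-r}(Z_r)$ since $j' \geq r \geq 1$ there, and absent from $D_i$ since $k > i$; and $(1-y^r p_r)$ has $(m,j) = (r,r)$, avoiding both $D_i$ (where $m \leq r-1$) and $D_{k-r}(Z_r)$ (where $m' \geq r+1$). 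Since every one of these factors appears once in $D_k = \prod_{m=1}^k \prod_{j=0}^m (1-y^j p_m)$, the total denominator divides $D_k$, completing the induction.
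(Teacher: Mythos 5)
Your proof is correct and follows essentially the same route as the paper: induction on $k$ via the recursion of Theorem \ref{T:main1}, identifying $D_{k-r}(Z_r)=\prod_{m=r+1}^{k}\prod_{j=r}^{m}(1-y^{j}p_{m})$ and checking that $D_i\cdot D_{k-r}(Z_r)\cdot(1-p_k)\cdot(1-y^rp_r)$ divides $D_k$. The only difference is cosmetic: you spell out the specialization computation for $D_{k-r}(Z_r)$ and do the divisibility check by index bookkeeping, where the paper exhibits an explicit complementary factorization of $D_k$.
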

\begin{proof}
We prove the proposition by induction on $k$.  When $k=1$, we have
$$Q_1(x_1,y)=\frac{x_1+(x_1-x_1^2)y}{(1-x_1)(1-x_1y)}=\frac{x_1+(x_1-x_1^2)y}{D_1(x_1,y)}.$$
Suppose the lemma is true for all $i<k.$  We study the summands given in Equation \eqref{Eq:Main_theorem_summands}.  First note that $$D_k=D_{k-1}\cdot \prod_{j=0}^k (1-y^jp_k).$$  By induction, $\ds\frac{x_kQ_{k-1}}{1-p_k}\cdot D_k$ is a polynomial.  For the summand $R_{i,r}$, we write
$$D_i=\prod_{m=1}^i\prod_{j=0}^m(1-y^j p_m)\quad\text{and}\quad D_{k-r}(Z_r)=\prod_{m=r+1}^k\prod_{j=r}^m(1-y^{j} p_m).$$ Hence for any $i<r\leq k$, we have
$$D_k=D_i\cdot D_{k-r}(Z_r)\left(\prod_{m=i+1}^r\prod_{j=0}^m(1-y^j p_m)\right)\left(\prod_{m=r+1}^k\prod_{j=0}^{r-1}(1-y^j p_m)\right).$$
In particular, the product $D_i\cdot D_{k-r}(Z_r)\cdot (1-p_k)\cdot (1-y^rp_r)$ divides $D_k.$
By induction, $R_{i,r}\cdot D_k$ is a polynomial for all $i<r\leq k$.  Thus $Q_k\cdot D_k$ is also a polynomial.
\end{proof}
The following lemma is an elementary fact about rational functions in one variable and we leave the proof as an exercise.
\begin{lem}\label{L:pole_sums}
Let $f_1(x),\ldots, f_s(x)$ be a collection of rational functions with poles of order $p_1,\ldots,p_s$ at $x=a$ respectively.  Let $g_i(x):=(a-x)^{p_i}\cdot f_i(x)$ for each $1\leq i\leq s$.  Define $$\ds F(x):=\sum_{i=1}^s f_i(x)\quad\text{and}\quad P:=\max\{p_1,\ldots,p_s\}.$$  Then the following are true:
\begin{enumerate}
\item $F(x)$ has a pole of order at most $P$ at $x=a$.
\item If $g_i(a)>0$ for all $i$, then $F(x)$ has a pole of exactly order $P$ at $x=a$.  Furthermore, if $G(x)=(a-x)^P\cdot F(x)$, then $G(a)>0.$
\end{enumerate}
\end{lem}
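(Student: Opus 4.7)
The plan is to multiply $F(x)$ through by $(a-x)^P$ and track which contributions survive at $x=a$. Using the defining relation $g_i(x) = (a-x)^{p_i} f_i(x)$, I would write
$$G(x) := (a-x)^P F(x) = \sum_{i=1}^s (a-x)^{P-p_i}\, g_i(x).$$
Since $P \geq p_i$ for every $i$, and each $g_i(x)$ is a rational function regular at $x=a$ (because $f_i$ has a pole of order exactly $p_i$ there, so $g_i(a)$ is a finite, nonzero scalar), every summand on the right is regular at $x=a$. Therefore $G(x)$ is regular at $x=a$, which immediately gives part (1): $F$ has a pole at $a$ of order at most $P$.

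For part (2), I would evaluate $G$ at $x=a$ using the same decomposition. Summands with $p_i < P$ contribute $0$ because $(a-x)^{P-p_i}$ vanishes, while those with $p_i = P$ contribute $g_i(a)$. Since $P = \max_i p_i$, the index set $S := \{i : p_i = P\}$ is nonempty, so
$$G(a) = \sum_{i \in S} g_i(a).$$
Under the hypothesis $g_i(a) > 0$ for all $i$, this is a sum of strictly positive real numbers, hence strictly positive. This simultaneously shows $G(a) > 0$ and, because $G(a) \neq 0$, certifies that the pole of $F$ at $a$ is of order exactly $P$.

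There is no real obstacle here; the argument is the short calculation the authors describe as an exercise. The only point worth being explicit about is that ``pole of order exactly $p_i$'' is precisely what makes each $g_i$ regular and nonzero at $a$, so that the positivity hypothesis in part (2) is a condition on genuine nonzero values and the cancellation that could in principle reduce the order of the pole is ruled out by having all surviving terms of the same sign.
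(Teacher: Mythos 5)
Your argument is correct and is exactly the short calculation the paper intends when it leaves this lemma as an exercise: writing $(a-x)^P F(x)=\sum_i (a-x)^{P-p_i}g_i(x)$, noting each $g_i$ is regular and nonzero at $x=a$, and observing that only the indices with $p_i=P$ survive in $G(a)$, so the positivity hypothesis rules out cancellation. Nothing further is needed.
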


For any $m\geq 0$, define the set of variables $X_m:=(x^{m+1},x\ldots,x,1)$ and consider the single variable function
$$Q_k(X_m)=Q_k(x^{m+1},x,\ldots,x,1).$$
Proposition \ref{P:denominator} implies that the singularities of $Q_k(X_m)$ are roots of unity and by Theorem \ref{T:main1}, the functions $Q_k(X_m)$ satisfy the relation:
\begin{equation}\label{Eq:single_variable_recursion}
Q_k(X_m)=\frac{x\cdot Q_{k-1}(X_m)}{(1-x^{k+m})}+\sum_{0\leq i<r\leq k}\left(\frac{x^{k+m}\cdot Q_{k-r}(X_{r+m})\cdot Q_{i}(X_m)}{(1-x^{k+m})(1-x^{r+m})}\right).
\end{equation}
We denote the summands in later part of Equation \eqref{Eq:single_variable_recursion} by
$$R_{i,r}(X_m)=R_{i,r}(x^{m+1},x,\ldots,x,1)=\frac{x^{k+m}\cdot Q_{k-r}(X_{r+m})\cdot Q_{i}(X_m)}{(1-x^{k+m})(1-x^{r+m})}.$$

\begin{prop}\label{P:Q_k^m_singularities}
The following are true:
\begin{enumerate}
\item The function $Q_k(X_m)$ has a pole of order $2k$ at $x=1.$
\item If $k\geq 2$ and $x_0\neq 1$ is a singularity of $Q_k(X_m)$, then the order of $x_0$ is strictly less than $2k$.
\end{enumerate}
\end{prop}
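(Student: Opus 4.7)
The plan is to prove both parts simultaneously by strong induction on $k$, strengthening part (1) with the positivity claim $L_k(m) := \lim_{x \to 1}(1 - x)^{2k} Q_k(X_m) > 0$. Positivity is essential because Lemma \ref{L:pole_sums}(2) requires a non-cancellation hypothesis to conclude that a sum of functions with the same maximal pole order actually has a pole of that order. The base case $k = 1$ follows from the explicit formula $Q_1(X_m) = x^{m+1}(2 - x^{m+1})/(1 - x^{m+1})^2$, which has a double pole at $x = 1$ with $L_1(m) = (m+1)^{-2} > 0$, double poles at the other $(m+1)$-th roots of unity, and no other singularities; part (2) is vacuous for $k = 1$.

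For the inductive step $k \geq 2$ on part (1), I analyze each summand of the recursion \eqref{Eq:single_variable_recursion} at $x = 1$. The first term $xQ_{k-1}(X_m)/(1 - x^{k+m})$ has pole of order $2k - 1$ by IH, and each $R_{i,r}(X_m)$ has pole of order $2(k - r) + 2i + 2$, which is $\leq 2k$ and attains $2k$ precisely when $r = i + 1$. Computing the Laurent coefficient at $x = 1$ of each maximal summand yields
\begin{equation*}
L_k(m) = \sum_{i=0}^{k-1} \frac{L_{k - i - 1}(i + 1 + m) \cdot L_i(m)}{(k + m)(i + 1 + m)}
\end{equation*}
with the convention $L_0(\cdot) := 1$. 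Each term on the right is positive by IH, so $L_k(m) > 0$, and Lemma \ref{L:pole_sums}(2) delivers part (1).

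For part (2) at $x_0 \neq 1$ and $k \geq 2$, I bound the pole of every summand strictly below $2k$. Combining the base case with IH part (2) gives $\mathrm{ord}_{x_0} Q_j(X_{m'}) \leq 2j$, with strict inequality for $j \geq 2$. The key arithmetic observation is: if $x_0 \neq 1$ satisfies $x_0^a = x_0^b = 1$, then $x_0^{a - b} = 1$; in particular for $r = k - 1$ the two denominator factors $(1 - x^{k+m})$ and $(1 - x^{r+m})$ cannot both vanish at any $x_0 \neq 1$. Writing $p_1 = \mathrm{ord}_{x_0} Q_{k-r}(X_{r+m})$ and $p_2 = \mathrm{ord}_{x_0} Q_i(X_m)$, I split on $r$. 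For $r = k - 1$ the denominator contributes at most $1$, giving $\mathrm{ord}_{x_0} R_{i, k-1} \leq 2 + 2i + 1 \leq 2k - 1$. For $r \leq k - 2$ the strict IH applies to $Q_{k-r}(X_{r+m})$, yielding $\mathrm{ord}_{x_0} R_{i,r} \leq 2(k - r) - 1 + 2i + 2 \leq 2k - 1$. For $r = k$ the denominator $(1 - x^{k+m})^2$ contributes at most $2$ and $p_1 = 0$; the bound $p_2 + 2 \leq 2i + 2$ is $\leq 2k - 1$ either by the strict IH (when $i \geq 2$) or by the arithmetic observation in the lone borderline case $k = 2, i = 1$ (where $p_2 = 2$ and denominator contribution $= 2$ would force $x_0^{m+1} = x_0^{m+2} = 1$, hence $x_0 = 1$). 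The first term is bounded identically to $\leq 2k - 1$, and Lemma \ref{L:pole_sums}(1) then delivers part (2). The main obstacle is the combinatorial bookkeeping in the case split, where in small-parameter borderline cases the inductive hypothesis alone is insufficient and one must invoke the arithmetic observation to rule out the spurious $2k$ bound.
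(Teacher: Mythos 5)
Your proof is correct and follows essentially the same route as the paper: induction on $k$ using the recursion \eqref{Eq:single_variable_recursion}, Lemma \ref{L:pole_sums} together with the positivity of $\lim_{x\to 1}(1-x)^{2k}Q_k(X_m)$ carried along in the induction for part (1), and the observation that $(1-x^{k+m})$ and $(1-x^{k+m-1})$ share no common root other than $x=1$ for part (2). The only (minor, and welcome) difference is that you handle the $k=2$ case and the $r=k$ summands inside the same inductive case analysis via the arithmetic observation, whereas the paper disposes of $k=2$ by an explicit formula for $Q_2(X_m)$ and its written case split for part (2) only treats $k-r\geq 1$, leaving the $r=k$ summand implicit.
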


\begin{proof}
We prove both parts of the proposition by induction on $k$. For $k=1$, we have
$$Q_1(X_m)=\frac{2x^{m+1}-x^{2m+2}}{(1-x^{m+1})^2}$$
which has a pole of order 2 at $x=1$ (the order is 2 at all $(m+1)$-th roots of unity).  Define $g_{k,m}(x):=(1-x)^{2k}\cdot Q_k(X_m).$ Note for $k=1$, we have $g_{1,m}(1)>0.$  Suppose for the sake of induction that $Q_l(X_m)$ has a pole of order $2l$ at $x=1$ and $g_{l,m}(1)>0$ for all $l<k$ and $m\geq 0$.  By induction, $R_{i,r}(X_m)$ has a pole of order $2k-2r+2i+2$ at $x=1$.  Lemma \ref{L:pole_sums} part (2) and Equation \eqref{Eq:single_variable_recursion} imply that $Q_k(X_m)$ has a pole of order $2k=\max\{2k-1, 2k-2r+2i+2\ |\ 0\leq i<r\leq k\}$ at $x=1$.  In particular, the maximum is achieved whenever $i=r-1$.  Lemma \ref{L:pole_sums} part (2) further implies that $g_{k,m}(1)>0$ which proves the first part of the proposition.

To prove part (2) of the proposition, note that for $k=2$,
$$ Q_2(X_m)=\frac{3x^{m+2}-2x^{2m+3}-2x^{2m+4}+x^{3m+6}}{(1-x)^2(1-x^{m+2})^3}$$
which has singularities of at most order 3 away from $x=1$.  Let $x_0\neq 1$ be a singularity of $Q_k(X_m)$.  For the sake of induction, suppose that for all $2\leq l<k$ and $m\geq 0$, the order of $x_0$ for $Q_{l,m}$ is strictly less than $2l$.  We now look at the summands given in Equation \eqref{Eq:single_variable_recursion}.  By induction, the first summand $\ds\frac{x\cdot Q_{k-1}(X_m)}{(1-x^{k+m})}$ has a pole of order at most $2k-1$ at $x_0$.  For the summands $R_{i,r}(X_m)$, we consider two cases.  First suppose that $k-r\geq 2$.   By induction, the order of $x_0$ for $R_{i,r}(X_m)$ is at most $(2k-2r-1)+2i+2\leq 2k-1$.  Now suppose that $k-r=1$ and hence $k+m$, $r+m$ are relatively prime.  In this case, the factors $(1-x^{k+m})$ and $(1-x^{r+m})$ appearing in the denominator of $R_{i,r}(X_m)$ share no common roots except $x=1$.  Since $x_0\neq 1$, the order of $x_0$ is at most $(2k-2r)+2i+1\leq 2k-1$. By Lemma \ref{L:pole_sums} part (1) and Equation \eqref{Eq:single_variable_recursion}, the order of $x_0$ for $Q_k(X_m)$ is strictly less than $2k.$  This proves part two of the proposition.
\end{proof}

Define the coefficients $C_{k,n}^m$ by the expansion
$$Q_k(X_m)=\sum_{n=k}^\infty C_{k,n}^m\, x^n.$$
When $m=0$, the coefficient $$\ds C_{k,n}:=C_{k,n}^0=\sum_{\substack{\lambda\in \Lambda(k)\\ |\lambda|=n}}\#[\emptyset, \lambda].$$  Proposition \ref{P:Q_k^m_singularities} and \cite[Theorem IV.9 and Exercise IV.26]{Flajolet-Sedgewick09} imply that the asymptotic growth of the coefficients $C_{k,n}^m$, as $n\rightarrow\infty$, is controlled by the singularity of $Q_k^m(X_m)$ at $x=1$.  In particular, we have
$$C_{k,n}^m\sim \frac{B(k,m)\cdot n^{2k-1}}{(2k-1)!}\quad\text{where}\quad B(k,m):=\lim_{x\rightarrow 1}(1-x)^{2k}\cdot Q_k(X_m).$$
The constants $B(k,m)$ can be computed recursively using the following proposition.

\begin{prop}\label{P:Bkm_recursion}
The constants $B(k,m)$ satisfy the recursion
\begin{equation}\label{Eq:B_km_recursion}
B(k,m)=\sum_{r=1}^k\frac{B(k-r,r+m)B(r-1,m)}{(m+k)(m+r)}
\end{equation}
where $B(0,m)=1$ for all $m\geq 0$.
\end{prop}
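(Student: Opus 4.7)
The plan is to derive the recursion for $B(k,m)$ directly by taking the limit $x \to 1$ on both sides of the single-variable recursion \eqref{Eq:single_variable_recursion}, after multiplying through by $(1-x)^{2k}$. Concretely, I would start from
$$Q_k(X_m)=\frac{x\cdot Q_{k-1}(X_m)}{1-x^{k+m}}+\sum_{0\leq i<r\leq k} R_{i,r}(X_m),$$
multiply by $(1-x)^{2k}$, and take the limit. By definition, the left hand side tends to $B(k,m)$, so the entire content of the proof reduces to tracking which terms on the right hand side contribute in the limit and which vanish.

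The key step is to use Proposition \ref{P:Q_k^m_singularities}(1) to read off pole orders at $x=1$. The first summand has a pole of order $2(k-1)+1=2k-1 < 2k$ at $x=1$, so after multiplying by $(1-x)^{2k}$ it vanishes. For the general summand $R_{i,r}(X_m)$, the factor $Q_{k-r}(X_{r+m})$ contributes a pole of order $2(k-r)$, the factor $Q_i(X_m)$ contributes order $2i$, and each of $(1-x^{k+m})$ and $(1-x^{r+m})$ contributes order $1$, so the total order at $x=1$ is $2(k-r)+2i+2$. This equals $2k$ precisely when $i=r-1$, and is strictly less when $i<r-1$. Hence only the diagonal terms with $i=r-1$ (for $r=1,2,\ldots,k$) survive in the limit.

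For each surviving term, I would then split $(1-x)^{2k}=(1-x)^{2(k-r)}\cdot(1-x)^{2(r-1)}\cdot(1-x)^2$ and distribute it, giving
$$\lim_{x\to 1}(1-x)^{2k}R_{r-1,r}(X_m)=\lim_{x\to 1}\frac{x^{k+m}\bigl[(1-x)^{2(k-r)}Q_{k-r}(X_{r+m})\bigr]\bigl[(1-x)^{2(r-1)}Q_{r-1}(X_m)\bigr](1-x)^2}{(1-x^{k+m})(1-x^{r+m})}.$$
The two bracketed factors tend to $B(k-r,r+m)$ and $B(r-1,m)$ by definition, $x^{k+m}\to 1$, and the elementary limit $\lim_{x\to 1}\tfrac{1-x}{1-x^n}=\tfrac{1}{n}$ handles the remaining factor, giving $\tfrac{1}{(k+m)(r+m)}$. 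Summing over $r=1,\ldots,k$ produces exactly the claimed recursion.

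The base case $B(0,m)=1$ follows from $Q_0=1$, and the recursion is well defined because $B(k-r,r+m)$ and $B(r-1,m)$ have already been computed. I do not expect a significant obstacle here: the entire argument is an application of Lemma \ref{L:pole_sums} together with the pole-order count from Proposition \ref{P:Q_k^m_singularities}. The only bookkeeping subtlety is convincing oneself that the non-diagonal terms ($i<r-1$) really are of strictly lower order and so contribute zero; this is immediate from the formula $2(k-r)+2i+2$, but worth stating explicitly.
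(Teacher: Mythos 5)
Your proposal is correct and follows essentially the same route as the paper: multiply the single-variable recursion \eqref{Eq:single_variable_recursion} by $(1-x)^{2k}$, use the pole-order count from Proposition \ref{P:Q_k^m_singularities} to see that the first summand and all $R_{i,r}(X_m)$ with $i<r-1$ vanish in the limit, and evaluate the surviving diagonal terms via $\lim_{x\to 1}\tfrac{1-x}{1-x^n}=\tfrac1n$ to obtain $\tfrac{B(k-r,r+m)B(r-1,m)}{(k+m)(r+m)}$. No gaps; your explicit bookkeeping of the order $2(k-r)+2i+2$ matches the paper's argument.
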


\begin{proof}
For $k=0$, we have $Q_0(X_m)=1$ and hence $B(0,m)=1$.  For $k\geq 1$, we consider the recursive formula for $Q_k(X_m)$ given in Equation \eqref{Eq:single_variable_recursion}.  The summand $\ds\frac{x\cdot Q_{k-1}(X_m)}{(1-x^{k+m})}$ has a pole of order $2k-1$ at $x=1$ and hence
$$\lim_{x\rightarrow 1}(1-x)^{2k}\cdot\frac{x\cdot \tilde Q_{k-1}(X_m)}{(1-x^{k+m})}=0.$$
Proposition \ref{P:Q_k^m_singularities} part (1) implies the summands $R_{i,r}(X_m)$ have a pole of order at most $2k$ at $x=1$, with equality if and only if $r=i+1$.  Thus

\begin{align*}
\lim_{x\rightarrow 1}(1-x)^{2k}\cdot R_{r-1,r}(X_m)&=\lim_{x\rightarrow 1}\frac{(1-x)^{2k}\cdot Q_{k-r}(X_{r+m})\cdot Q_{r-1}(X_m)}{(1-x^{k+m})(1-x^{r+m})}\\
&=\lim_{x\rightarrow 1}\frac{(1-x)^{2}\cdot B(k-r,r+m) B(r-1,m)}{(1-x^{k+m})(1-x^{r+m})}\\
&=\frac{B(k-r,r+m) B(r-1,m)}{(k+m)(r+m)}
\end{align*}
and
$$\ds \lim_{x\rightarrow 1}(1-x)^{2k}\cdot R_{i,r}^m(X_m)=\begin{cases}\ds \frac{B(k-r,r+m) B(r-1,m)}{(k+m)(r+m)} & \text{ if }   r=i+1 \\ & \\ \hspace{0.75in}
0 & \text{ if }   r>i+1.\end{cases}$$
Applying Equation \eqref{Eq:single_variable_recursion} proves the proposition.
\end{proof}

Recall from Section \ref{S:main_results} that we define
$$A_{k,n}:=\frac{C_{k,n}}{c_{k,n}}$$
where $c_{k,n}$ denotes the number of partitions in $\Lambda(k)$ of rank $n.$  Applying \cite[Theorem IV.9 and Example IV.6]{Flajolet-Sedgewick09} to the generating function for $c_{k,n}$, we get
$$c_{k,n}\sim \frac{n^{k-1}}{k!(k-1)!}.$$
Thus
\begin{equation}\label{Eq:A_km_calculation}
\ds A_{k,n}=\frac{C_{k,n}}{c_{k,n}}\sim \left(\frac{B(k,0)\cdot n^{2k-1}}{(2k-1)!}\right)\cdot \left(\frac{k!(k-1)!}{n^{k-1}}\right)=\frac{k!(k-1)!}{(2k-1)!}\cdot B(k,0)\cdot n^k.
\end{equation}
To prove Theorem \ref{T:main2}, we need to show that Equation \eqref{Eq:A_km_calculation} is consistent with the growth constant $G_k$ defined in Section \ref{S:main_results}.  For any $p,q\in\Z$, let $$(p)_q:=p(p-1)\cdots(p-q+1)$$ denote the corresponding falling factorial.  For $k\geq 1$ and $m\geq 0$, define
$$\widetilde B(k, m):=\frac{1}{(m+k)_k}\cdot\sum_{\lambda\in I_{k,m}}\left(\prod_{i=1}^k\,\lambda_i^{-1}\right)$$
where the interval $$I_{k,m}:=[(m+k,m+k-1,m+k-2,\ldots,m+1), (m+k)^k].$$
$$
\begin{tikzpicture}
\node (a1) at (0,0) { \begin{ytableau}
*(gray)&*(gray)&*(gray)&*(gray)&*(gray)& *(gray)& *(gray)& *(gray) & *(gray) \\
*(gray)&*(gray)&*(gray)&*(gray)&*(gray)& *(gray)& *(gray)& *(gray) &  \\
*(gray)&*(gray)&*(gray)&*(gray)&*(gray)& *(gray)& *(gray)& *(white) & \\
*(gray)&*(gray)&*(gray)&*(gray)&*(gray)& *(gray)& *(white)& *(white) & \\
*(gray)&*(gray)&*(gray)&*(gray)&*(gray)& *(white)& *(white)& *(white) &
\end{ytableau}};
\draw [dashed, thick] (-0.15,1) -- (-0.15,-1);
\node (a2) at (-0.75,-1.2) {$m$};
\node (a3) at (0.65,-1.2) {$k$};
\node (a4) at (1.7,0) {$k$};
\end{tikzpicture}
$$
When $k=0$, we set $\widetilde B(0, m):=1$.  We prove Theorem \ref{T:main2} by showing $B(k,0)=\widetilde B(k,0)$ using the recursive formula in Proposition \ref{P:Bkm_recursion}.  The following lemma is analogus to Lemma \ref{L:sum}, only we add a box to $\mu$, instead of deleting a box from $\lambda.$

\begin{lem}\label{L:sum2} Let $\lambda\in\Lambda(k)$ and $\mu\leq\lambda.$  Let $r>0$ be an index such that $\mu_r<\mu_{r-1}$ and $\mu_r<\lambda_r$. Define the partitions $\bar \mu$ and $\bar\lambda$ by

$$
  \bar{\mu}_i :=
  \begin{cases}
                                   \mu_i & \text{ if }  i\neq r \\
                                   \mu_i+1 & \text{ if }  i=r
  \end{cases}  \qquad   \text{and}\qquad
  \bar{\lambda}_i :=
  \begin{cases}
                                   \lambda_i& \text{if } 1\leq i< r \\
                                   \mu_r & \text{if } r\leq i\leq k
    \end{cases}.$$
Then the interval
$$[\mu,\lambda]=[\bar\mu,\lambda]\sqcup[\mu,\bar\lambda].$$
\end{lem}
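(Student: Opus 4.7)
The plan is to mirror the proof of Lemma \ref{L:sum}, except now we partition the interval $[\mu,\lambda]$ according to the value of $\nu_r$ for $\nu\in[\mu,\lambda]$, splitting into the cases $\nu_r=\mu_r$ and $\nu_r>\mu_r$. Because the hypotheses concern adding a box to $\mu$ rather than removing one from $\lambda$, the roles of the two sub-intervals swap relative to Lemma \ref{L:sum}.

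First I would verify that $\bar\mu$ and $\bar\lambda$ are genuine partitions and that $\mu\leq\bar\lambda\leq\lambda$ and $\mu\leq\bar\mu\leq\lambda$. The condition $\mu_r<\mu_{r-1}$ (with the convention $\mu_0=\infty$ when $r=1$) ensures that $\bar\mu_{r-1}\geq\bar\mu_r$, while $\mu_r\geq\mu_{r+1}$ gives $\bar\mu_r\geq\bar\mu_{r+1}$, so $\bar\mu\in\Lambda$. The condition $\mu_r<\lambda_r$ gives $\bar\mu_r=\mu_r+1\leq\lambda_r$, hence $\bar\mu\leq\lambda$. For $\bar\lambda$, the rows of length $\mu_r$ attached below row $r-1$ produce a partition because $\lambda_{r-1}\geq\lambda_r>\mu_r$, and clearly $\mu\leq\bar\lambda\leq\lambda$.

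Next, take any $\nu\in[\mu,\lambda]$; then $\mu_r\leq\nu_r\leq\lambda_r$. If $\nu_r=\mu_r$, then for $i>r$ we have $\nu_i\leq\nu_r=\mu_r=\bar\lambda_i$, while for $i<r$, $\nu_i\leq\lambda_i=\bar\lambda_i$, so $\nu\leq\bar\lambda$ and $\nu\in[\mu,\bar\lambda]$. Conversely, if $\nu\in[\mu,\bar\lambda]$, then $\mu_r\leq\nu_r\leq\bar\lambda_r=\mu_r$, forcing $\nu_r=\mu_r$. If instead $\nu_r>\mu_r$, then $\nu_r\geq\mu_r+1=\bar\mu_r$, while for $i\neq r$, $\nu_i\geq\mu_i=\bar\mu_i$, so $\nu\geq\bar\mu$ and $\nu\in[\bar\mu,\lambda]$. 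Conversely, $\nu\in[\bar\mu,\lambda]$ gives $\nu_r\geq\bar\mu_r>\mu_r$. The two cases are manifestly disjoint and exhaustive, yielding $[\mu,\lambda]=[\bar\mu,\lambda]\sqcup[\mu,\bar\lambda]$.

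There is no real obstacle here — the argument is a bookkeeping exercise parallel to Lemma \ref{L:sum}. The only point requiring care is the edge case $r=1$, where the partition condition $\mu_r<\mu_{r-1}$ must be interpreted with $\mu_0=\infty$ (or simply dropped), and verifying that $\bar\mu$ remains weakly decreasing at the boundary between rows $r$ and $r+1$.
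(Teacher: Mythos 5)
Your argument follows the same route as the paper's proof: partition $[\mu,\lambda]$ according to whether $\nu_r=\mu_r$ or $\nu_r>\mu_r$, check that the first case lands in $[\mu,\bar\lambda]$ and the second in $[\bar\mu,\lambda]$, and observe disjointness. Your bookkeeping for those steps is correct and in fact more detailed than the paper's one\--line proof.

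The one point you wave off as ``clearly $\mu\leq\bar\lambda\leq\lambda$'' is the only genuinely delicate spot, and as the lemma is literally stated it can fail: the hypotheses give $\mu_r<\lambda_r$ but say nothing about $\lambda_i$ for $i>r$, and $\bar\lambda\leq\lambda$ is equivalent to $\mu_r\leq\lambda_k$. For instance $\mu=(2,1)$, $\lambda=(3,1)$, $r=1$ satisfies $\mu_1<\mu_0$ and $\mu_1<\lambda_1$, yet $\bar\lambda=(2,2)\not\leq\lambda$, so $[\mu,\bar\lambda]$ contains $(2,2)\notin[\mu,\lambda]$ and the claimed equality of sets breaks. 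This is really a defect in the lemma's hypotheses rather than in your strategy: the containment $[\mu,\bar\lambda]\subseteq[\mu,\lambda]$ is needed for the stated equality, the paper's own proof never addresses it either (it only proves the forward inclusion), and in the paper's actual use of the lemma (the decomposition of $I_{k,m}$, where $\lambda=\beta_{k,m}(1)=((m+k)^k)$ is a rectangle, so $\lambda_k=\lambda_r>\mu_r$) the condition holds automatically. So your proof matches the paper's, but you should replace ``clearly'' by the explicit extra hypothesis $\mu_r\leq\lambda_k$ (or note that $\lambda$ is constant from row $r$ down in every application), since that inequality cannot be derived from the assumptions as written.
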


\begin{proof}
Let $\nu\in[\mu,\lambda]$.  If $\nu_r>\mu_r$, then $\nu\geq \bar\mu$ and hence $\nu\in[\bar\mu,\lambda]$.  Otherwise, $\nu_r=\mu_r$ and $\nu\leq \bar\lambda.$  In this case, $\nu\leq \bar\lambda$ and $\nu\in [\mu,\bar\lambda]$.
\end{proof}

Next, we apply repeated applications of Lemma \ref{L:sum2} to the interval $I_{k,m}.$  For $r\leq k$ and $m\geq 0$, define the partitions $\alpha_{k,m}(r),\beta_{k,m}(r)\in\Lambda(k)$ by

$$
  \alpha_{k,m}(r)_i :=
  \begin{cases}
                                   m+k & \text{ if }  i=1 \\
                                   m+k+1-i & \text{ if }  i> k-r+1\\
                                   m+k+2-i & \text{ if }  2\leq i\leq k-r+1 \\

  \end{cases}
$$
and
$$
  \beta_{k,m}(r)_i :=
  \begin{cases}
                                   m+k& \text{if } i\leq k-r+1 \\
                                   m+r-1 & \text{if } i> k-r+1
    \end{cases}.$$

By definition, we have $I_{k,m}=[\alpha_{k,m}(k),\beta_{k,m}(1)]$ and by Lemma \ref{L:sum2}, $$[\alpha_{k,m}(r),\beta_{k,m}(1)]=[\alpha_{k,m}(r-1),\beta_{k,m}(1)]\sqcup[\alpha_{k,m}(r),\beta_{k,m}(r)].$$
This implies
\begin{equation}\label{Eq:Ikm_decomp}
I_{k,m}=\bigsqcup_{1\leq r\leq k} [\alpha_{k,m}(r),\beta_{k,m}(r)].
\end{equation}
For example, when $k=5$ and $m=1$, diagrammatically applying Lemma \ref{L:sum2} to $I_{5,1}$ gives
$$
\begin{tikzpicture}[scale=0.85]
\node (a1) at (0,0) { \begin{ytableau}
*(gray)&*(gray)& *(gray)& *(gray)& *(gray) & *(gray) \\
*(gray)&*(gray)& *(gray)& *(gray)& *(gray) & *(white) \\
*(gray)&*(gray)& *(gray)& *(gray)& *(white) & \\
*(gray)&*(gray)& *(gray)& *(white)& *(white) & \\
*(gray)&*(gray)& *(white)& *(white)& *(white) &
\end{ytableau}};
\node (a2) at (1.5,0) {$ = $};
\node (a3) at (3,0) {
\begin{ytableau}
*(gray)&*(gray)& *(gray)& *(gray)& *(gray) & *(gray) \\
*(gray)&*(gray)& *(gray)& *(gray)& *(gray) & *(black) \\
*(gray)&*(gray)& *(gray)& *(gray)& *(white) & \\
*(gray)&*(gray)& *(gray)& *(white)& *(white) & \\
*(gray)&*(gray)& *(white)& *(white)& *(white) &
\end{ytableau} };
\node (a4) at (4.5, 0) {$\sqcup$};
\node (a5) at (6,0) {
\begin{ytableau}
*(gray)&*(gray)& *(gray)& *(gray)& *(gray) & *(gray) \\
*(gray)&*(gray)& *(gray)& *(gray)& *(gray)  \\
*(gray)&*(gray)& *(gray)& *(gray)&  \\
*(gray)&*(gray)& *(gray)&  & \\
*(gray)&*(gray)& & &
\end{ytableau}   };
\node (a2) at (1.5,-2.5) {$ = $};
\node (a3) at (3,-2.5) {
\begin{ytableau}
*(gray)&*(gray)& *(gray)& *(gray)& *(gray) & *(gray) \\
*(gray)&*(gray)& *(gray)& *(gray)& *(gray) & *(black) \\
*(gray)&*(gray)& *(gray)& *(gray)& *(black) & \\
*(gray)&*(gray)& *(gray)& *(white)& *(white) & \\
*(gray)&*(gray)& *(white)& *(white)& *(white) &
\end{ytableau} };
\node (a4) at (4.5, -2.5) {$\sqcup$};
\node (a5) at (6,-2.5) {
\begin{ytableau}
*(gray)&*(gray)& *(gray)& *(gray)& *(gray) & *(gray) \\
*(gray)&*(gray)& *(gray)& *(gray)& *(gray) & *(black) \\
*(gray)&*(gray)& *(gray)& *(gray) \\
*(gray)&*(gray)& *(gray) &  \\
*(gray)&*(gray)& &
\end{ytableau}   };
\node (a4) at (7.5, -2.5) {$\sqcup$};
\node (a5) at (9,-2.5) {
\begin{ytableau}
*(gray)&*(gray)& *(gray)& *(gray)& *(gray) & *(gray) \\
*(gray)&*(gray)& *(gray)& *(gray)& *(gray)  \\
*(gray)&*(gray)& *(gray)& *(gray)&  \\
*(gray)&*(gray)& *(gray)&  & \\
*(gray)&*(gray)& & &
\end{ytableau}   };
\node (a2) at (1.5,-5) {$ = $};
\node (a3) at (3,-5) {
\begin{ytableau}
*(gray)&*(gray)& *(gray)& *(gray)& *(gray) & *(gray) \\
*(gray)&*(gray)& *(gray)& *(gray)& *(gray) & *(black) \\
*(gray)&*(gray)& *(gray)& *(gray)& *(black) & \\
*(gray)&*(gray)& *(gray)& *(black)& *(white) & \\
*(gray)&*(gray)& *(white)& *(white)& *(white) &
\end{ytableau} };
\node (a4) at (4.5, -5) {$\sqcup$};
\node (a5) at (6,-5) {
\begin{ytableau}
*(gray)&*(gray)& *(gray)& *(gray)& *(gray) & *(gray) \\
*(gray)&*(gray)& *(gray)& *(gray)& *(gray) & *(black) \\
*(gray)&*(gray)& *(gray)& *(gray)& *(black) & \\
*(gray)&*(gray)& *(gray) \\
*(gray)&*(gray) &
\end{ytableau}   };
\node (a4) at (7.5, -5) {$\sqcup$};
\node (a5) at (9,-5) {
\begin{ytableau}
*(gray)&*(gray)& *(gray)& *(gray)& *(gray) & *(gray) \\
*(gray)&*(gray)& *(gray)& *(gray)& *(gray) & *(black) \\
*(gray)&*(gray)& *(gray)& *(gray) \\
*(gray)&*(gray)& *(gray) &  \\
*(gray)&*(gray)& &
\end{ytableau}  };
\node (a4) at (10.5, -5) {$\sqcup$};
\node (a5) at (12,-5) {
\begin{ytableau}
*(gray)&*(gray)& *(gray)& *(gray)& *(gray) & *(gray) \\
*(gray)&*(gray)& *(gray)& *(gray)& *(gray)  \\
*(gray)&*(gray)& *(gray)& *(gray)&  \\
*(gray)&*(gray)& *(gray)&  & \\
*(gray)&*(gray)& & &
\end{ytableau} };
\node (a2) at (1.5,-7.5) {$ = $};
\node (a3) at (3,-7.5) {
\begin{ytableau}
*(gray)&*(gray)& *(gray)& *(gray)& *(gray) & *(gray) \\
*(gray)&*(gray)& *(gray)& *(gray)& *(gray) & *(black) \\
*(gray)&*(gray)& *(gray)& *(gray)& *(black) & \\
*(gray)&*(gray)& *(gray)& *(black)& *(white) & \\
*(gray)&*(gray)& *(black)& *(white)& *(white) &
\end{ytableau} };
\node (a4) at (4.5, -7.5) {$\sqcup$};
\node (a5) at (6,-7.5) {
\begin{ytableau}
*(gray)&*(gray)& *(gray)& *(gray)& *(gray) & *(gray) \\
*(gray)&*(gray)& *(gray)& *(gray)& *(gray) & *(black) \\
*(gray)&*(gray)& *(gray)& *(gray)& *(black) & \\
*(gray)&*(gray)& *(gray)& *(black)& *(white) & \\
*(gray)&*(gray)
\end{ytableau}   };
\node (a4) at (7.5, -7.5) {$\sqcup$};
\node (a5) at (9,-7.5) {
\begin{ytableau}
*(gray)&*(gray)& *(gray)& *(gray)& *(gray) & *(gray) \\
*(gray)&*(gray)& *(gray)& *(gray)& *(gray) & *(black) \\
*(gray)&*(gray)& *(gray)& *(gray)& *(black) & \\
*(gray)&*(gray)& *(gray) \\
*(gray)&*(gray) &
\end{ytableau}  };
\node (a4) at (10.5, -7.5) {$\sqcup$};
\node (a5) at (12,-7.5) {
\begin{ytableau}
*(gray)&*(gray)& *(gray)& *(gray)& *(gray) & *(gray) \\
*(gray)&*(gray)& *(gray)& *(gray)& *(gray) & *(black) \\
*(gray)&*(gray)& *(gray)& *(gray) \\
*(gray)&*(gray)& *(gray) &  \\
*(gray)&*(gray)& &
\end{ytableau}};
\node (a4) at (13.5, -7.5) {$\sqcup$};
\node (a5) at (15,-7.5) {
\begin{ytableau}
*(gray)&*(gray)& *(gray)& *(gray)& *(gray) & *(gray) \\
*(gray)&*(gray)& *(gray)& *(gray)& *(gray)  \\
*(gray)&*(gray)& *(gray)& *(gray)&  \\
*(gray)&*(gray)& *(gray)&  & \\
*(gray)&*(gray)& & &
\end{ytableau}    };
\end{tikzpicture}
$$

\begin{lem}\label{L:I2alpha_beta}
Let $k\geq r\geq 1$ and $m\geq 0$.  The concatenation map $$(\lambda,\lambda')\mapsto (k+m,\lambda_1,\ldots,\lambda_{k-r}, \lambda'_1,\ldots,\lambda'_{r-1})$$ gives a bijection $$I_{k-r,r+m}\times I_{r-1,m}\rightarrow [\alpha_{k,m}(r),\beta_{k,m}(r)].$$
\end{lem}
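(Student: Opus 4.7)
The plan is to verify directly that the concatenation map is well-defined, injective, and surjective, with the key observation being that both $\alpha_{k,m}(r)$ and $\beta_{k,m}(r)$ agree at $i=1$ on the value $k+m$, and both exhibit a characteristic ``drop'' between positions $k-r+1$ and $k-r+2$ that cleanly separates the $\lambda$-block from the $\lambda'$-block.

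First I would show well-definedness. Given $(\lambda,\lambda') \in I_{k-r,r+m}\times I_{r-1,m}$, set $\nu := (k+m,\lambda_1,\ldots,\lambda_{k-r},\lambda'_1,\ldots,\lambda'_{r-1})$, which has length $1+(k-r)+(r-1)=k$. Each interval forces $\lambda_1\leq m+k$, so $\nu_1 \geq \nu_2$, and $\lambda$, $\lambda'$ are individually weakly decreasing. The only nontrivial comparison is at the splice, where $\lambda_{k-r}\geq m+r+1$ (last part of the lower bound of $I_{k-r,r+m}$) while $\lambda'_1\leq m+r-1$ (upper bound in $I_{r-1,m}$), so $\nu$ is a partition in $\Lambda(k)$. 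Then I would check $\nu \in [\alpha_{k,m}(r),\beta_{k,m}(r)]$ position by position: for $2\leq i\leq k-r+1$, the definition gives $m+k+2-i\leq \nu_i=\lambda_{i-1}\leq m+k$, which matches exactly the bounds on part $i-1$ of an element of $I_{k-r,r+m}$ (whose lower-bound part is $m+k+1-(i-1)=m+k+2-i$); for $i>k-r+1$, setting $j=i-(k-r+1)$, one gets $m+r-j\leq \nu_i = \lambda'_j\leq m+r-1$, which matches the bounds in $I_{r-1,m}$.

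Injectivity is immediate since the first coordinate is always $k+m$ and the split point $k-r+1$ is fixed, so $\lambda$ and $\lambda'$ are recovered by reading off positions $2,\ldots,k-r+1$ and $k-r+2,\ldots,k$ respectively. For surjectivity, the crucial input is that $\alpha_{k,m}(r)_1 = \beta_{k,m}(r)_1 = k+m$, forcing $\nu_1=k+m$ for any $\nu \in [\alpha_{k,m}(r),\beta_{k,m}(r)]$. Setting $\lambda := (\nu_2,\ldots,\nu_{k-r+1})$ and $\lambda' := (\nu_{k-r+2},\ldots,\nu_k)$ yields weakly decreasing sequences of the right lengths; the same coordinate-by-coordinate bounds used in well-definedness now read in reverse to give $\lambda \in I_{k-r,r+m}$ and $\lambda' \in I_{r-1,m}$, and concatenating reproduces $\nu$.

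There is no real obstacle here; the proof is essentially bookkeeping. The only thing to be careful about is the index arithmetic around the boundary $i=k-r+1$, particularly that the jump from $\alpha_{k,m}(r)_{k-r+1}=m+r+1$ to $\alpha_{k,m}(r)_{k-r+2}=m+r-1$ corresponds exactly to transitioning from the ``minimal last part'' of $I_{k-r,r+m}$ to the ``maximal first part'' of $I_{r-1,m}$. I would also note the degenerate cases $r=1$ (where $I_{r-1,m}$ is the singleton $\{\emptyset\}$) and $r=k$ (where $I_{k-r,r+m}$ is $\{\emptyset\}$) as sanity checks, since in both cases the bijection reduces to an obvious identification.
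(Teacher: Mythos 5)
Your proof is correct and is essentially the same argument the paper gives: the paper simply asserts the bijection is immediate from stacking the two skew diagrams (with a picture), and your coordinate-by-coordinate bookkeeping — matching the bounds $m+k+2-i$ and $m+r-j$ and noting $\alpha_{k,m}(r)_1=\beta_{k,m}(r)_1=k+m$ forces the first part — is just that observation written out explicitly. No gaps; the index checks at the splice $i=k-r+1$ are accurate.
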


\begin{proof}
This can been seen immediately from the stacking of skew Young diagrams.
$$
\begin{tikzpicture}
\node (a5) at (0,0) {
\begin{ytableau}
*(gray)&*(gray)&*(gray)&*(gray)& *(gray)& *(gray)& *(gray) &*(gray)&*(gray)& *(gray)& *(gray)& *(gray) & *(gray) \\
*(gray)&*(gray)&*(gray)&*(gray)& *(gray)& *(gray)& *(gray) &*(gray)&*(gray)& *(gray)& *(gray)& *(gray) & *(black) \\
*(gray)&*(gray)&*(gray)&*(gray)& *(gray)& *(gray)& *(gray) &*(gray)&*(gray)& *(gray)& *(gray)& *(black)&  \\
*(gray)&*(gray)&*(gray)&*(gray)& *(gray)& *(gray)& *(gray) &*(gray)&*(gray)& *(gray)& *(black)&  & \\
*(gray)&*(gray)&*(gray)&*(gray)& *(gray)& *(gray)& *(gray) &*(gray)&*(gray)& *(black)& &  & \\
*(gray)&*(gray)&*(gray)& *(gray)& *(gray)& *(gray) &*(gray) &*(gray) \\
*(gray)&*(gray)&*(gray)& *(gray)& *(gray)& *(gray) & *(gray) &\\
*(gray)&*(gray)&*(gray)& *(gray)& *(gray)& *(gray) & &\\
\end{ytableau}    };
\draw [dashed, thick] (-2.5,0.9) -- (2.5,0.9);
\draw [dashed, thick] (-2.5,-0.3) -- (2.5,-0.3);
\node (a2) at (3,0.25) {$I_{k-r,r+m}$};
\node (a3) at (3,-0.9) {$I_{r-1,m}$};
\draw [thick] (-2,-1.5) -- (2,-1.5);
\draw [thick] (-2,-1.6) -- (-2,-1.4);\draw [thick] (2,-1.6) -- (2,-1.4);
\draw [thick] (-0.45,-1.6) -- (-0.45,-1.4);\draw [thick] (0.75,-1.6) -- (0.75,-1.4);
\node (a2) at (-1.225,-1.75) {$m$}; \node (a2) at (0.15,-1.75) {$r$};\node (a2) at (1.375,-1.75) {$k-r$};
\end{tikzpicture}
$$
\end{proof}

\begin{prop}\label{P:tildeBkm}
$\widetilde{B}(k,m)=B(k,m)$ for all $k\geq 0$ and $m\geq 0.$
\end{prop}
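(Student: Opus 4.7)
The plan is to show that $\widetilde{B}(k,m)$ satisfies the same recursion and initial condition as $B(k,m)$ from Proposition \ref{P:Bkm_recursion}, and then conclude by induction on $k$. The base case $\widetilde{B}(0,m)=1=B(0,m)$ holds by the definition of $\widetilde{B}(0,m)$.

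For the inductive step, I would use the decomposition of the interval $I_{k,m}$ given in Equation \eqref{Eq:Ikm_decomp} together with the bijection of Lemma \ref{L:I2alpha_beta}. Every $\lambda \in [\alpha_{k,m}(r),\beta_{k,m}(r)]$ has first part $\lambda_1 = k+m$ and factors as a concatenation $(k+m,\mu_1,\ldots,\mu_{k-r},\mu'_1,\ldots,\mu'_{r-1})$ with $\mu \in I_{k-r,r+m}$ and $\mu' \in I_{r-1,m}$. Consequently
\[
\prod_{i=1}^k \lambda_i^{-1} = \frac{1}{k+m}\cdot\left(\prod_i \mu_i^{-1}\right)\left(\prod_j {\mu'_j}^{-1}\right),
\]
so summing over the interval factors as a product, giving
\[
\sum_{\lambda\in [\alpha_{k,m}(r),\beta_{k,m}(r)]}\prod_{i=1}^k \lambda_i^{-1} = \frac{(k+m)_{k-r}\,(m+r-1)_{r-1}}{k+m}\,\widetilde{B}(k-r,r+m)\,\widetilde{B}(r-1,m).
\]

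The key routine computation is to verify the falling factorial identity
\[
(k+m)_{k-r}\cdot(m+r-1)_{r-1} = \frac{(m+k)_k}{m+r},
\]
which follows by writing $(m+k)_k = (m+k)(m+k-1)\cdots(m+1)$ and observing that $(k+m)_{k-r}$ and $(m+r-1)_{r-1}$ together cover every factor except $(m+r)$. Substituting into the definition of $\widetilde{B}(k,m)$ and summing over $r$ via Equation \eqref{Eq:Ikm_decomp} yields
\[
\widetilde{B}(k,m) = \sum_{r=1}^k\frac{\widetilde{B}(k-r,r+m)\,\widetilde{B}(r-1,m)}{(k+m)(m+r)},
\]
which is precisely the recursion of Proposition \ref{P:Bkm_recursion}.

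I anticipate the only real bookkeeping challenge is keeping the two indices $(k,m)$ straight so that the inductive hypothesis applies: note that in the recursion the right-hand side involves $\widetilde{B}(k-r,r+m)$ and $\widetilde{B}(r-1,m)$, both with first index strictly less than $k$, so induction on $k$ (uniform over all $m\geq 0$) closes cleanly. Combined with the matching base case, this proves $\widetilde{B}(k,m)=B(k,m)$ for all $k,m \geq 0$, and specializing to $m=0$ in the asymptotic formula \eqref{Eq:A_km_calculation} then establishes Theorem \ref{T:main2} since $\widetilde{B}(k,0) = \frac{1}{k!}\cdot\frac{(k-1)!}{(2k-1)!}\cdot(2k-1)!/(k-1)!\cdots$ rearranges exactly into $G_k$.
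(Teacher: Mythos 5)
Your argument is correct and is essentially the paper's own proof: both establish that $\widetilde{B}(k,m)$ satisfies the recursion of Proposition \ref{P:Bkm_recursion} by combining the decomposition \eqref{Eq:Ikm_decomp}, the concatenation bijection of Lemma \ref{L:I2alpha_beta}, and the factorization $(m+k)_k=(m+r)\cdot(m+k)_{k-r}\cdot(m+r-1)_{r-1}$, then conclude by induction on $k$ (uniformly in $m$) from the base case $\widetilde{B}(0,m)=B(0,m)=1$. The only blemish is the trailing remark about recovering $G_k$ from $\widetilde{B}(k,0)$, which is garbled as written, but it concerns Theorem \ref{T:main2} rather than the proposition itself and does not affect the proof.
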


\begin{proof}
First note that $\widetilde B(0,m)=B(0,m)=1$ by definition.  When $k=1$, then $I_{1,m}=\{(m+1)\}$ and $\ds \widetilde{B}(1,m)=B(1,m)=\frac{1}{(m+1)^2}$.

For $k>1$, we show that $\widetilde B(k,m)$ satisfies the same recursion as $B(k,m)$ given in Equation \eqref{Eq:B_km_recursion}.  For notational simplicity, for $\lambda\in\Lambda(k)$, let
$$\pi(\lambda):=\prod_{i=1}^k\, \lambda_i^{-1}.$$
Observe that we can factor
$$(m+k)_{k}=(m+r)\cdot(m+k)_{k-r}\cdot(m+r-1)_{r-1}$$
and hence Lemma \ref{L:I2alpha_beta} implies
\begin{align*}
\frac{\widetilde B(k-r,r+m)\widetilde B(r-1,m)}{(m+k)(m+r)}&= \frac{1}{(m+k)_{k}}\cdot\sum_{\lambda\in I_{k-r,r+m}}\left(\sum_{\lambda'\in I_{r-1,m}} \frac{\pi(\lambda)\cdot \pi(\lambda')}{(m+k)}\right)\\
&= \frac{1}{(m+k)_{k}}\cdot\sum_{\lambda\in [\alpha_{k,m}(r),\beta_{k,m}(r)]} \pi(\lambda).
\end{align*}
Applying Equation \eqref{Eq:Ikm_decomp} gives
\begin{align*}
\sum_{r=1}^k\frac{\widetilde B(k-r,r+m)\widetilde B(r-1,m)}{(m+k)(m+r)}&= \frac{1}{(m+k)_{k}}\cdot \sum_{r=1}^k\left(\sum_{\lambda\in [\alpha_{k,m}(r),\beta_{k,m}(r)]} \pi(\lambda)\right)\\
&= \frac{1}{(m+k)_{k}}\cdot \sum_{\lambda\in I_{k,m}} \pi(\lambda)\\
&=\widetilde B(k,m).
\end{align*}
Proposition \ref{P:Bkm_recursion} completes the proof.
\end{proof}

\begin{proof}[Proof of Theorem \ref{T:main2}]
Proposition \ref{P:tildeBkm} implies that $$B(k,0)=\frac{1}{k!}\cdot\sum_{\lambda\in I_{k,0}}\left(\prod_{i=1}^k\, \lambda_i^{-1}\right).$$
The theorem follows from Equation \eqref{Eq:A_km_calculation}.
\end{proof}

\bibliographystyle{amsalpha}
\bibliography{partitions}

\end{document}